\newtheorem{lemma}{Lemma}
\newtheorem{remark}{Remark}
\newtheorem{theorem}{Theorem}
\newtheorem{property}{Property}
\newtheorem{proof}{Proof}
\newtheorem{assumption}{Assumption}
\newcommand{\modif}[1]{{\textcolor{black}{#1}}}
\begin{document}
%
% paper title
% Titles are generally capitalized except for words such as a, an, and, as,
% at, but, by, for, in, nor, of, on, or, the, to and up, which are usually
% not capitalized unless they are the first or last word of the title.
% Linebreaks \\ can be used within to get better formatting as desired.
% Do not put math or special symbols in the title.
\title{Output-feedback stabilization of an underactuated network of $N$ interconnected $n+ m$ hyperbolic PDE systems}
%
%
% author names and IEEE memberships
% note positions of commas and nonbreaking spaces ( ~ ) LaTeX will not break
% a structure at a ~ so this keeps an author's name from being broken across
% two lines.
% use \thanks{} to gain access to the first footnote area
% a separate \thanks must be used for each paragraph as LaTeX2e's \thanks
% was not built to handle multiple paragraphs
%

\author{Jean Auriol% <-this % stops a space
\thanks{J. Auriol is with Université Paris-Saclay, CNRS, CentraleSupélec, Laboratoire des Signaux et Systèmes, 91190 Gif-sur-Yvette, France, jean.auriol@centralesupelec.fr. This project received funding from the Agence Nationale de la Recherche via grant PANOPLY ANR-23-CE48-0001-01.}% <-this % stops a space
}

% note the % following the last \IEEEmembership and also \thanks - 
% these prevent an unwanted space from occurring between the last author name
% and the end of the author line. i.e., if you had this:
% 
% \author{....lastname \thanks{...} \thanks{...} }
%                     ^------------^------------^----Do not want these spaces!
%
% a space would be appended to the last name and could cause every name on that
% line to be shifted left slightly. This is one of those "LaTeX things". For
% instance, "\textbf{A} \textbf{B}" will typeset as "A B" not "AB". To get
% "AB" then you have to do: "\textbf{A}\textbf{B}"
% \thanks is no different in this regard, so shield the last } of each \thanks
% that ends a line with a % and do not let a space in before the next \thanks.
% Spaces after \IEEEmembership other than the last one are OK (and needed) as
% you are supposed to have spaces between the names. For what it is worth,
% this is a minor point as most people would not even notice if the said evil
% space somehow managed to creep in.

% The paper headers
\markboth{}%
{S}
% The only time the second header will appear is for the odd numbered pages
% after the title page when using the twoside option.
% 
% *** Note that you probably will NOT want to include the author's ***
% *** name in the headers of peer review papers.                   ***
% You can use \ifCLASSOPTIONpeerreview for conditional compilation here if
% you desire.

% If you want to put a publisher's ID mark on the page you can do it like
% this:
%\IEEEpubid{0000--0000/00\$00.00~\copyright~2015 IEEE}
% Remember, if you use this you must call \IEEEpubidadjcol in the second
% column for its text to clear the IEEEpubid mark.

% use for special paper notices
%\IEEEspecialpapernotice{(Invited Paper)}

% make the title area
\maketitle

% As a general rule, do not put math, special symbols or citations
% in the abstract or keywords.
\begin{abstract}

In this article, we detail the design of an output feedback stabilizing control law for an underactuated network of $N$ subsystems of $n+m$ heterodirectional linear first-order hyperbolic Partial Differential Equations interconnected through their boundaries. The network has a chain structure, as only one of the subsystems is actuated. The available measurements are located at the opposite extremity of the chain. The proposed approach introduces a new type of integral transformation to tackle in-domain couplings in the different subsystems while guaranteeing a ‘‘clear actuation path" between the control input and the different subsystems. Then, it is possible to state several essential properties of each subsystem: output trajectory tracking, input-to-state stability, and predictability (the possibility of designing a state prediction). We recursively design a stabilizing state-feedback controller by combining these properties. We then design a state-observer that reconstructs delayed values of the states. This observer is combined with the state-feedback control law to obtain an output-feedback controller. Simulations complete the presentation.
\end{abstract}

% Note that keywords are not normally used for peerreview papers.
\begin{IEEEkeywords}
backstepping, PDEs networks, difference systems, predictor, tracking 
\end{IEEEkeywords}

% For peer review papers, you can put extra information on the cover
% page as needed:
% \ifCLASSOPTIONpeerreview
% \begin{center} \bfseries EDICS Category: 3-BBND \end{center}
% \fi
%
% For peerreview papers, this IEEEtran command inserts a page break and
% creates the second title. It will be ignored for other modes.
\IEEEpeerreviewmaketitle

%This class of system may appear when considering oil production systems made of networks of pipes (whose principal line is known as the manifold)~\cite{mokhtari2017performance} or traffic network systems~\cite{yu2023traffic,yu2022simultaneous}.

\section{Introduction}
\IEEEPARstart{T}{he} interconnection of hyperbolic systems (potentially coupled with ODEs) represents a well-established topic, given its inherent occurrence in various industrial contexts (e.g., electric power transmission systems~\cite{schmuck2014feed}, control of after-treatment devices in exhaust lines~\cite{depcik2005one}, or traffic networks~\cite{yu2023traffic}). Specifically, interconnections characterized by a \textbf{cascade chain structure} have garnered notable attention~\cite{aarsnes2018torsional}. \modif{This particular network configuration holds significance due to its capacity to model intricate industrial phenomena, such as the propagation of torsional waves in drilling systems~\cite{aarsnes2018torsional}, deepwater construction vessels~\cite{stensgaard2010subsea}, density-flow systems as lossless electrical lines, frictionless open channels, or gas pipes~\cite{bastin2016stability,bastin2013exponential}.} %In this paper, we design an output-feedback controller to stabilize an arbitrary number of non-scalar PDE subsystems interconnected through their boundaries with a chain structure, the actuation being located at one extremity of the chain.

Most existing constructive control strategies for interconnected systems are grounded in the backstepping approach. Notably, significant attention has been directed towards cascaded interconnections of hyperbolic PDE-ODE systems, as evidenced in~\cite{aamo2012disturbance,deutscher2021backstepping,auriol2018delay}, as well as ODE-PDE-ODE configurations~\cite{deutscher2018output,BouSaba2019,wang2020delay}, where the design of control hinges upon the reformulation of the interconnection as a \emph{time-delay system}. Recent advancements have also emerged for interconnected PDE systems containing non-linear ODEs by designing a modular approach involving tracking controllers~\cite{irscheid2021observer,irscheid2023output}. These contributions generally establish stringent rank conditions on the various coupling matrices, among other requisites. In numerous scenarios encompassing underactuated PDEs (such as the simple interconnection of two scalar hyperbolic systems~\cite{auriol2019delay}, wherein only one of the subsystems is actuated), these conditions remain unmet, despite the existence of stabilizing controllers~\cite{auriol2019delay}.  This explains why underactuated PDEs have been the source of several contributions these last few years.

While the design of comprehensive control strategies for all types of underactuated systems or network configurations appears to be currently overly ambitious, several existing methodologies in the literature have put forward constructive control designs specifically tailored to networks with well-defined \textbf{structural characteristics}. Particularly noteworthy are \textbf{chain} configurations featuring a cascade structure, which have garnered significant attention. In such configurations, the network is a straight line, and the actuator/sensor is located at one of its extremities. This class of systems can arise in scenarios like oil production systems comprised of interconnected pipes, where the main conduit is known as the manifold~\cite{mokhtari2017performance}. More precisely, the lower part of the drill string is usually made up of drill collars that can significantly impact global dynamics due to their inertia. These pipes may have distinct lengths, densities, inertia, or Young's modulus. The variations in characteristic line impedance across space may lead to reflections manifesting at junctions. Such simple chain-structured networks can also model ventilation within buildings~\cite{witrant2010wireless}, density-flow systems~\cite{hayat2019pi}, open canals~\cite{de2003boundary}, or traffic systems, as described in~\cite{yu2023traffic,yu2022simultaneous} in the case of two cascaded freeway segments. \modif{Among other examples of interest, we can cite networks of 1-D flexible multi-structures~\cite{dager2006wave} as interconnected Timoshenko beams that can be used to model compliant mechanical structures such as cantilevers or flexible endoscopes.}

Recently, several methodologies have emerged to formulate stabilizing controllers for such chain structures. Among these methodologies, PI boundary controllers have been explored in~\cite{bastin2016stability,hayat2019pi} for fully actuated networks (i.e., networks with one control input per set of heterodirectional PDEs). Explicit stability conditions were obtained by utilizing suitable quadratic Lyapunov functions. In~\cite{knuppel2014control}, the authors consider a flatness-based design of a feedforward control of tree-like transmission networks. Analogous scenarios involving interconnected systems have been scrutinized in~\cite{su2017boundary}, where a velocity recirculation phenomenon in a wave equation was considered. Furthermore, the assessment of exact boundary controllability for nodal profiles of quasilinear hyperbolic systems with interface conditions in tree-like networks was conducted in~\cite{wang2014exact} using the method of characteristics. In a more recent study, detailed in \cite{strecker2017output}, the authors investigated output feedback control strategies for interconnections of $2 \times 2$ semilinear hyperbolic systems using the dynamics on the characteristic lines. Backstepping-based controllers have been developed in~\cite{deutscher2021backstepping,auriol2019delay,auriol2020output} in the case of interconnected scalar subsystems. While these advancements have contributed significantly to the field, they exhibited adaptability limitations across different chain structures. For example, integrating an additional PDE subsystem within the chain structure was not feasible in~\cite{auriol2020output}. To overcome this limitation, a novel approach was proposed in~\cite{redaud2021SCL} through the introduction of a \textbf{new recursive dynamics framework}. This framework, which is grounded in innovative prediction-based control laws~\cite{artstein1982linear,bekiaris2014simultaneous,bresch2017prediction} for difference equations, exhibits \textbf{modularity}. This modularity stems from the requirement that the control law only necessitates fundamental properties for each subsystem (controllability, trackability, observability, predictability). However, it should be noted that the techniques proposed in~\cite{redaud2021SCL} do not lend themselves directly to non-scalar subsystems. In the case of two non-scalar subsystems, a two-step procedure was proposed in~\cite{auriol2022robust}.%, adding a flatness-based feedforward tracking component~\cite{meurer2009tracking,hu2015boundary} to the design. %Nevertheless, this approach does not directly apply to more than two subsystems due to causality issues when designing the predictors. Interestingly, this limitation does not appear when considering only two subsystems. Moreover, compared to~\cite{auriol2022robust}, a state-observer is designed in this paper. 

\modif{
In this paper, we overcome the limitations of~\cite{redaud2021SCL} (regarding the dimension of the subsystems) and of~\cite{auriol2022robust} (regarding the number of subsystems) to design a stabilizing output-feedback controller for an interconnection of an arbitrary number of non-scalar PDE subsystems with a chain structure.   Although the methodology we propose uses the same ingredients presented in~\cite{auriol2022robust} and~\cite{redaud2021SCL}, extending such results to a chain of non-scalar subsystems is far from trivial. Indeed, when dealing with non-scalar systems, backstepping transformations cannot usually remove all the in-domain coupling terms~\cite{coron2017finite}. Therefore, due to these remaining in-domain coupling terms, the approach presented in~\cite{redaud2021SCL} does not work as the predictors cannot be adequately defined (causality problem). In the case of two subsystems, this problem was partially solved in~\cite{auriol2022robust} using appropriate flatness-based feedforward tracking components in the control input. However, this solution required adding additional terms to the backstepping transformation depending on the downstream subsystems, leading to complex, intricate kernel equations when there are more than two subsystems. We introduce a \textbf{new type of integral transformation}, with a time-affine component to overcome these limitations. This new component is used to ‘‘clear the actuation path" of each subsystem by removing the local terms initially present in the system and avoiding additional terms coming from the downstream subsystem. To the best of the author's knowledge, such a paper represents a novelty in the literature as it solves a challenging stabilization problem by taking advantage of a new class of time-affine transformations. Finally, it is the first time an observer is designed for a chain of non-scalar subsystems (only state-feedback stabilization was considered in~\cite{auriol2022robust}).
}

The paper is organized as follows. We introduce the class of system under consideration in Section~\ref{Section_Pres_Systems}. In Section~\ref{Chap_3_Sec_SF}, we present a new class of integral transformations and recursively design a stabilizing state-feedback control law. An observer that estimates delayed values of the states is proposed in Section~\ref{Chap_3_Sec_State_Estimation}. It is used to obtain an output-feedback stabilizing control law. The results of the paper are illustrated with simulations in Section~\ref{Section_Simulations}.  Finally, we give some concluding remarks in Section~\ref{Section_Conclusion}.
%that simplifies the structure of the system. %It then becomes possible to design a stabilizing state-feedback control law recursively. An observer that estimates delayed values of the states is proposed in Section~\ref{Chap_3_Sec_State_Estimation}. %This observer can be combined with the previous state-feedback controller to obtain an output-feedback stabilizing control law. The results of the paper are illustrated with simulations in Section~\ref{Section_Simulations}. Finally, we give some concluding remarks in Section~\ref{Section_Conclusion}.

\subsection{Notations}
In this section, we detail the notations used throughout this paper. For any distinct real numbers $a$ and $b$, any positive integer $n$, we denote $L^2([a,b],\mathbb{R}^n)$ the space of real-valued square-integrable functions defined on~$[a,b]$ with the standard~$L^2$ norm, \emph{i.e.}, for any $f\in L^2([a,b],\mathbb{R}^n)$,~$||f||_{L^2([a,b])}^2 = \int_a^b f^T(x)f(x) \mathrm{d}x$.
For $n \in \mathbb{N}$ functions $u_k$ in~$L^2([a,b],\mathbb{R})$, we define the $L^2$ norm of the vector $(u_1,\hdots,u_n)$ as the sum of the square of the $L^2$-norm of each function composing the vector: $||(u_1,\hdots,u_n)||_{L^2}^2=\sum_{k=1}^n||u_k||^2_{L^2([a,b])}.$
 %This notation can be straightforwardly generalized to an arbitrary finite number of $L^2$-functions.
The set~$L^{\infty}([0,1],\mathbb{R})$ denotes the space of bounded real-valued functions defined on~$[0,1]$ with the standard~$L^\infty$ norm, \textit{i.e.}, for any~$f \in L^\infty ([0,1],\mathbb{R})$, $||f||_{L^\infty}=\underset{x \in [0,1]}{\text{ess sup}}   |f(x)|.$   For any positive integer $n$, we denote $H^1([a,b],\mathbb{R}^n)$ the one-dimensional Sobolev space.
%, i.e. the subset of functions $f$ in $L^2([a, b], \mathbb{R}^n)$ such that $f$ and its weak derivative of order 1 have a finite $L^2$-norm. The associated norm is denoted $||\cdot||_{H^1}$. 
% For any $ f \in H^1([a,b],\mathbb{R}^n)$, $$||f||_{H^1}=\sqrt{\int_a^b f^T(x)f(x)+(f'(x))^T(f'(x)) dx}.$$
For any integer $m>0$ and any real delay $\tau>0$, we denote $L^2([-\tau,0], \mathbb{R}^m)$ the Banach space of $L^2$ functions mapping the interval $[-\tau, 0]$ into $\mathbb{R}^m$. For a function $\phi: [-\tau,\infty) \mapsto \mathbb{R}^m$, we define its partial trajectory $\phi_{[t]}$ by $\phi_{[t]}:\phi(t+\theta)$, $-\tau \leq \theta \leq 0$. This maximum delay $\tau$ will be related to the transport velocities of the considered PDE system. The associated norm is denoted $||\phi_{[t]}||_{L^2_\tau}$ where
% \begin{align}
% ||\phi_{[t]}||_{L^2_\tau}=\left(\int_{-\tau}^0 \phi^T(t+s)\phi(t+s)d s\right)^{\frac{1}{2}}. \label{norm_delay}
% \end{align}
for every $\tau>r$, we define
\begin{align}
||\phi_{[t]}||_{L^2_r}= \left(|\int_{-r}^0 \phi^T(t+\theta)\phi(t+\theta)d\theta|\right)^{\frac{1}{2}}.\label{norm_r}
\end{align}
The identity matrix of dimension $n$ will be denoted $\text{Id}_n$ (or $\text{Id}$ if no confusion on the dimensions arises).

\section{Problem under consideration} \label{Section_Pres_Systems}

\subsection{Interconnection with a cascade structure}
In this paper, we consider a system composed of $N>0$ PDE subsystems interconnected through their boundaries in a chain structure, as schematically represented in Figure~\ref{Fig_Example_PDE_chain}. The control input and the available sensors are located at one extremity of the chain. Each subsystem is composed of an arbitrary number of linear hyperbolic PDEs and is modeled by the following set of equations~($i \in \{1, \dots, N\}$)
\begin{align}
\partial_t u_i(t,x)+\Lambda_i^+ \partial_xu_i(t,x)&=\Sigma^{++}_i(x) v_i(t,x) \nonumber \\
&+\Sigma^{+-}_i(x) v_i(t,x), \label{Chap_3_eq_init_PDE_u}\\
\partial_tv_i(t,x)-\Lambda_i^-  \partial_xv_i(t,x)&=\Sigma^{-+}_i(x) u_i(t,x)\nonumber \\
&+\Sigma^{--}_i(x) v_i(t,x), \label{Chap_3_eq_init_PDE_v}
\end{align}
evolving in~$\{(t,x)~\text{s.t.}~t>0,~x\in[0,1] \}$, where $u_i=(u^1_i, \hdots, u^{n_i}_i)^T$ and $v_i=(v^1_i, \dots, v^{m_i}_i)^T$, all the $n_i$ and $m_i$ being positive integers. The matrices~$\Lambda_i^+$ and~$\Lambda_i^-$ are diagonal and represent the \textbf{transport velocities} of each subsystem. We have~$\Lambda_i^+=$ diag~$(\lambda_i^j)$ and~$\Lambda_i^-=$ diag ($\mu_i^j$) and we assume that their coefficients satisfy$$
-\mu_i^{m_i}< \cdots <-\mu_i^1 <0<\lambda_i^1 <\cdots < \lambda_i^{n_i}.$$
These transport velocities are assumed to be constant. However, all our results can be extended to space-dependent transport velocities at the cost of technical and lengthy computations. The spatially-varying coupling matrices~$\Sigma_i^{\cdot \cdot}$ are regular matrices (we assume here that each coefficient of the matrix is a continuous function). Without any loss of generality, we can assume that the diagonal entries of~$\Sigma_i^{++}$  and~$\Sigma_i^{--}$ are equal to zero~\cite{coron2013local}.
%. Indeed, these diagonal terms can be transferred to the anti-diagonal terms using an exponential change of variables~\cite{coron2013local}.
The different subsystems are connected through their boundaries in a chain structure. We have
\begin{align}
u_i(t,0) &= Q_{i,i}v_i(t,0)+Q_{i,i-1}u_{i-1}(t,1),
\label{Chap_3_eq_init_bound_u}\\
v_i(t,1) &=R_{i,i} u_i (t,1) +R_{i,i+1} v_{i+1}(t,0)\label{Chap_3_eq_init_bound_v}
\end{align}
where the different coupling and~$R_{i,j}, Q_{i,j}$ are constant. By convention we consider that~$R_{N,N+1}=0$ and~$Q_{1,0}=\text{Id}$. The function $u_{0}(t,1)$ corresponds to the \textbf{control input}, $U(t) \in \mathbb{R}^{n_1}$. The measured output is denoted as $y(t)$ and verifies $y(t)=u_N(t,1)$.  
The initial conditions of each subsystem belong to $H^1([0,1],\mathbb{R}^{n_i})\times H^1([0,1],\mathbb{R}^{m_i})$. They satisfy the appropriate compatibility conditions (as stated in~\cite{bastin2016stability}), so that the system~\eqref{Chap_3_eq_init_PDE_u}-\eqref{Chap_3_eq_init_bound_v} is well-posed~\cite[Theorem A.1]{bastin2016stability}. Finally, we denote $\tau_i$ the maximum transport delay associated to each PDE subsystem: $\tau_i=\frac{1}{\lambda_i^1}+\frac{1}{\mu_i^1}$.

The interconnected system~\eqref{Chap_3_eq_init_PDE_u}-\eqref{Chap_3_eq_init_bound_v} can be recast under a more condensed form as a general $n+m$ system, using a technique referred to as \textbf{folding} (see~\cite{auriol2020output,auriol2022robust,de2018backstepping} for details). However, such a system would still be underactuated and classical results from the literature could not be applied. Moreover, such a condensed representation would shadow the cascade structure between the different subsystems. Conversely, the representation~\eqref{Chap_3_eq_init_PDE_u}-\eqref{Chap_3_eq_init_bound_v}  highlights that the interactions between the different subsystems only occur at the boundaries. %For instance, the output of the second subsystem entering the first subsystem could be seen as a disturbance acting on the first subsystem (even if the corresponding output of the first subsystem indirectly modifies this disturbance signal). 
For a subsystem $i$, we will call the subsystem $i+1$ the \textbf{downstream subsystem} and the subsystem $i-1$ the \textbf{upstream subsystem}.

\begin{figure*}[htb]%
\begin{center}
 \scalebox{0.8}{
\begin{tikzpicture}
% (u_n,v_n) system
%PDE
\draw [>=stealth,->,red,very thick] (0,0) -- (3,0);
\draw [red] (1.5,0) node[above]{$u_1(t,x)$};
\draw [>=stealth,<-,blue,very thick] (0,-1.5) -- (3,-1.5);
\draw [blue] (1.5,-1.5) node[below]{$v_1(t,x)$};

%In-domain couplings 
\draw [>=stealth,<-,dashed, thick] (1,-1.5) -- (1,0);
\draw(1,-0.75) node[left]{$\Sigma^{-+}_1$};
\draw [>=stealth,->,dashed, thick] (2,-1.5) -- (2,0);
\draw(2,-0.75) node[left]{$\Sigma^{++}_1$};

%B.C
\draw [blue,>=stealth, thick](-0.5,-0.75) arc (-180:-135:1.1);
\draw [blue,>=stealth,->, thick](-0.5,-0.75) arc (-180:-225:1.1);
\draw [blue] (-0.5,-0.75) node[left]{$Q_{1,1}$};

\draw [red,>=stealth, thick](3.5,-0.75) arc (0:45:1.1);
\draw [red,>=stealth,->, thick](3.5,-0.75) arc (0:-45:1.1);
\draw [red] (3.5,-0.75) node[left]{$R_{1,1}$};

%B.C between sub (n) and (n-1)
\draw [>=stealth,->,green!50!black!90,very thick] (3.5,0) -- (4.5,0);
\draw [color=green!50!black!90] (4,0) node[above]{$Q_{2,1}$};

\draw [>=stealth,<-,green!50!black!90,very thick] (3.5,-1.5) -- (4.5,-1.5);
\draw [color=green!50!black!90] (4,-1.5) node[below]{$R_{1,2}$};

%% Between the subsystems
\draw (5,0) node {$\bullet$};
\draw (5.5,0) node {$\bullet$};
\draw (6,0) node {$\bullet$};

\draw [blue,>=stealth, thick](4.5,-0.75) arc (-180:-135:1.1);
\draw [blue,>=stealth,->, thick](4.5,-0.75) arc (-180:-225:1.1);
%\draw [red] (4.5,-0.75) node[right]{$\rho_{n-1,n-1}$};

\draw [red,>=stealth, thick](6.5,-0.75) arc (0:45:1.1);
\draw [red,>=stealth,->, thick](6.5,-0.75) arc (0:-45:1.1);
%\draw [red] (6.5,-0.75) node[left]%{$\rho_{3,3}$};

\draw (5,-1.5) node {$\bullet$};
\draw (5.5,-1.5) node {$\bullet$};
\draw (6,-1.5) node {$\bullet$};

\draw [>=stealth,->,green!50!black!90,very thick] (6.5,0) -- (7.5,0);
\draw [color=green!50!black!90] (7,0) node[above]{$Q_{n-1,n-2}$};

\draw [>=stealth,<-,green!50!black!90,very thick] (6.5,-1.5) -- (7.5,-1.5);
\draw [color=green!50!black!90] (7,-1.5) node[below]{$R_{n-2,n-1}$};
%% Subsystem 2
% (u_2,v_2) system
%PDE
\draw [>=stealth,->,red,very thick] (8,0) -- (11,0);
\draw [red] (9.5,0) node[above]{$u_{n-1}(t,x)$};
\draw [>=stealth,<-,blue,very thick] (8,-1.5) -- (11,-1.5);
\draw [blue] (9.5,-1.5) node[below]{$v_{n-1}(t,x)$};

%In-domain couplings 
\draw [>=stealth,<-,dashed, thick] (9,-1.5) -- (9,0);
\draw(9,-0.75) node[left]{$\Sigma^{-+}_{n-1}$};
\draw [>=stealth,->,dashed, thick] (10,-1.5) -- (10,0);
\draw(10,-0.75) node[left]{$\Sigma^{+-}_{n-1}$};

%B.C
\draw [blue,>=stealth, thick](7.5,-0.75) arc (-180:-135:1.1);
\draw [blue,>=stealth,->, thick](7.5,-0.75) arc (-180:-225:1.1);
%\draw [blue] (7.5,-0.75) node[right]{$q_{n-1,n-1}$};

\draw [red,>=stealth, thick](11.5,-0.75) arc (0:45:1.1);
\draw [red,>=stealth,->, thick](11.5,-0.75) arc (0:-45:1.1);
%\draw [red] (11.5,-0.75) node[left]{$\rho_{n-1,n-1}$};

%B.C between sub (n) and (n-1)

\draw [>=stealth,->,green!50!black!90,very thick] (11.5,0) -- (12.5,0);
\draw [color=green!50!black!90] (12,0) node[above]{$Q_{n,n-1}$};

\draw [>=stealth,<-,green!50!black!90,very thick] (11.5,-1.5) -- (12.5,-1.5);
\draw [color=green!50!black!90] (12,-1.5) node[below]{$R_{n-1,n}$};

%%Subsystem 1

%PDe
\draw [>=stealth,->,red,very thick] (13,0) -- (16,0);
\draw [red] (14.5,0) node[above]{$u_n(t,x)$};
\draw [>=stealth,<-,blue,very thick] (13,-1.5) -- (16,-1.5);
\draw [blue] (14.5,-1.5) node[below]{$v_n(t,x)$};

%In-domain couplings 
\draw [>=stealth,<-,dashed, thick] (14,-1.5) -- (14,0);
\draw(14,-0.75) node[left]{$\Sigma^{-+}_n$};
\draw [>=stealth,->,dashed, thick] (15,-1.5) -- (15,0);
\draw(15,-0.75) node[left]{$\Sigma^{+-}_n$};

%B.C
\draw [blue,>=stealth, thick](12.5,-0.75) arc (-180:-135:1.1);
\draw [blue,>=stealth,->, thick](12.5,-0.75) arc (-180:-225:1.1);
\draw [blue] (12.5,-0.75) node[left]{$Q_{n,n}$};

\draw [red,>=stealth, thick](16.5,-0.75) arc (0:45:1.1);
\draw [red,>=stealth,->, thick](16.5,-0.75) arc (0:-45:1.1);
\draw [red] (16.5,-0.75) node[left]{$R_{n,n}$};

\draw [>=stealth,->,purple,very thick] (-0.8,0) -- (-0.4,0);
\draw [purple] (-0.8,0) node[left]{$U(t)$};

%% x-axis

\draw [>=stealth,<->,very thick] (7.5,-2.5) -- (11.5,-2.5);
\draw [>=stealth,<->,very thick] (12.5,-2.5) -- (16.5,-2.5);
\draw [very thick] (0,-2.6) -- (0,-2.4);
\draw [very thick] (11,-2.6) -- (11,-2.4);
\draw [very thick] (13,-2.6) -- (13,-2.4);
\draw [very thick] (8,-2.6) -- (8,-2.4);
\draw [very thick] (16,-2.6) -- (16,-2.4);
\draw (11,-2.4) node[above]{1};
\draw (13,-2.4) node[above]{0};
\draw (8,-2.4) node[above]{0};
\draw [very thick] (3,-2.6) -- (3,-2.4);
\draw (3,-2.4) node[above]{1};
\draw (16,-2.4) node[above]{1};
\draw [>=stealth,<->,very thick] (-0.5,-2.5) -- (3.5,-2.5);
\draw (0,-2.4) node[above]{0};
\draw [>=stealth,<->,very thick] (4,-2.5) -- (7,-2.5);
%\draw (17.5,-2.4) node[above]{$x$};
\end{tikzpicture}}
\end{center}
\caption{Schematic representation of the chain of linear PDE subsystem~\eqref{Chap_3_eq_init_PDE_u}-\eqref{Chap_3_eq_init_PDE_v}.}%
\label{Fig_Example_PDE_chain}%
\end{figure*}
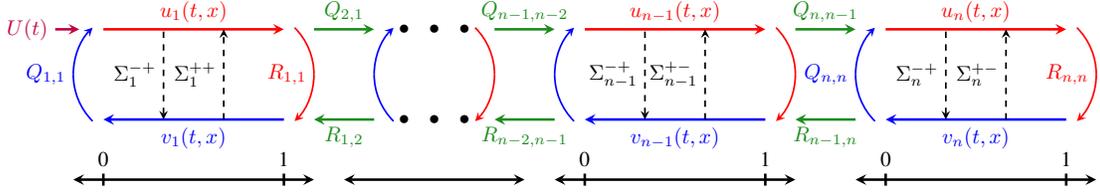

%%%%%%%%%%%%%%%%%%%%%%%%%%%%%%%%%%%%%%%%%%%%%%%%%%%%%%%
\subsection{Structural assumptions}
To design an appropriate stabilizing output feedback controller, we require several assumptions. \modif{First, to guarantee the possibility of designing a delay-robust controller, we must avoid having an infinite number of unstable roots in the right-half plan (as shown in~\cite{logemann1996conditions})}
%, as this prevents the possibility of (delay)-robust stabilization (as shown~\cite{logemann1996conditions}). 
\modif{This induces the following assumption (see~\cite{auriol2019explicit})
\begin{assumption} \label{Ass_robustness}
    The open-loop system~\eqref{Chap_3_eq_init_PDE_u}-\eqref{Chap_3_eq_init_bound_v} without the in-domain coupling terms $\Sigma_i^{\cdot \cdot}$ is exponentially stable.
\end{assumption}
} Then, we need the following assumption to stabilize the downstream subsystem states using actuation from the upstream subsystem.
\begin{assumption} \label{Chap_3_ass-control}
For all $i \in \{2,\dots, N\}$, the rank of the matrix $Q_{i-1,i}$ is equal to $n_i$.
\end{assumption}
This assumption implies that the matrix $Q_{i-1,i}$ admits a right inverse. A possible choice is given by the Moore–Penrose right
inverse: $Q_{i-1,i}^\top(Q_{i-1,i}Q_{i-1,i}^\top)^{-1}$.  This \textbf{conservative} assumption will be used to design a \emph{virtual actuation} for each subsystem. 
\modif{It implies that the dimension of the (virtual) input entering the subsystem $i$ (which corresponds to the effect of the upstream subsystem) is equal to the number of the rightward propagating states $n_i$ of this subsystem. Such a condition is usually required to design stabilizing controllers for hyperbolic systems that do not have a specific structure (see, e.g., \cite{coron2017finite,auriol2016minimum,hu2016control}). Indeed, to the best of our knowledge, only marginal results currently exist in the literature for stabilizing under-actuated systems (i.e., systems for which the dimension of the control input is smaller than the dimension of the boundary
 state) with no specific cascade structure (see~\cite{auriol2020CDCunderactuation}). All in all, Assumption~\ref{Chap_3_ass-control} is required to avoid such an underactuated configuration.  %Moreover, if it were possible to design a stabilizing controller for under-actuated systems, then we could directly apply such results to design a stabilizing controller for the interconnected system~\eqref{Chap_3_eq_init_PDE_u}-\eqref{Chap_3_eq_init_bound_v}. 
%However, such results do not currently exist, which explains why 
Therefore, to stabilize the system~\eqref{Chap_3_eq_init_PDE_u}-\eqref{Chap_3_eq_init_bound_v}, we will design a specific control strategy that takes advantage of the interconnection structure. }
 We are led to a similar assumption to designing a state observer.
\begin{assumption}\label{Chap_3_ass-obs}
For all $i \in \{2,\dots, N\}$, the rank of the matrix $Q_{i-1,i}$ is equal to $n_{i-1}$.
\end{assumption}
This assumption implies that the matrix $Q_{i-1,i}$ admits a left inverse. \modif{This condition was not present in~\cite{auriol2022robust} (as only state-feedback stabilization was considered) since it is required to recursively design the proposed observer.} Again, Assumption~\ref{Chap_3_ass-obs} is conservative \modif{and is used to avoid having under-measured subsystems.} %but to the best of our knowledge, only specific results currently exist in the literature for estimating under-measured systems with no specific cascade structure (see~\cite{auriol2020CDCunderactuation}). 
Combining Assumption~\ref{Chap_3_ass-control} and Assumption~\ref{Chap_3_ass-obs}, we obtain that all the $n_i$ are equal \modif{(i.e., all the subsystems have the same number of rightward propagating states)} and that the matrices $Q_{i-1,i}$ are invertible. This is related to the fact that we considered anti-collocated measurements. In the case of collocated measurements (i.e., $y(t)=v_1(t,0)$), Assumption~\ref{Chap_3_ass-obs} would have been expressed as a rank condition on the matrices $R_{i,i+1}$ (as it is the case in~\cite{redaud2021SCL} \modif{and the rightward propagating states would not need to have the same dimensions anymore}).

\subsection{Toward a recursive design} \label{Chap_3_rec_design}
The objective of this paper is to design an output-feedback control law that that stabilizes the interconnected system~\eqref{Chap_3_eq_init_PDE_u}-\eqref{Chap_3_eq_init_bound_v} in the sense of the $L^2$-norm. From Figure~\ref{Fig_Example_PDE_chain}, it can be seen that a subsystem~$i$ will act on the downstream subsystem~$i+1$ through~$u_i(t,1)$, and on the upstream subsystem~$i-1$ through~$v_i(t,0)$. Thus, each subsystem can only be stabilized through its upstream subsystem and estimated through its downstream subsystem. Due to the hyperbolic nature of the different subsystems, the effect of the control input $U(t)$ on the subsystem $i$ will be delayed and modified by the different in-domain coupling terms. 

To stabilize the whole chain, we extend the \textit{recursive interconnected dynamics control framework} introduced in~\cite{redaud2021SCL}. Roughly speaking, the control law is recursively obtained by considering stabilizing virtual inputs for each subsystem and ensuring the output of the upstream subsystem converges to this desired virtual input. The control design becomes more straightforward and is based on simple assumptions that can be independently verified for each subsystem. \modif{Therefore, the proposed recursive design is somehow inspired by the classical integrator backstepping. Such connections were, for instance, already mentioned in~\cite{gehringsystematic} for an ODE-PDE-ODE interconnection when using an analogous approach.}
We propose the following control strategy: 
\begin{enumerate}
\item First, using integral transformations, we simplify the structure of each subsystem to remove the in-domain coupling terms that appear in the $u_i$-PDEs (equation~\eqref{Chap_3_eq_init_PDE_u}). \modif{The transformations proposed in this paper are new as they include a time-affine component.}
\item Then, for each subsystem $i$, we consider the effect of the upstream subsystem $i-1$ as a delayed virtual input $U_i(t-\sum_{j=1}^{i-1}\frac{1}{\lambda_j^1})$ and the effect of the downstream subsystem $i+1$ as a disturbance term. We combine appropriate \textbf{state predictions} with a \textbf{flatness-based feedforward tracking controller} to guarantee that the right output of this subsystem converges to the delayed virtual input $U_{i+1}(t-\sum_{j=1}^{i}\frac{1}{\lambda_j^1})$ that will stabilize the downstream subsystem. Iterating such a procedure, it is possible to design a stabilizing control law $U(t)$ for the whole system. 
\item The closed-loop stability is shown recursively, using \textbf{Input-to-State Stability} (ISS) properties. 
\item A similar \textbf{recursive approach} is used to design a state observer. Going recursively from one subsystem to the next, we can estimate delayed values of the states.% at each subsystem boundary. 
\item Finally, similarly to what has been done for finite-dimensional systems~\cite{karafyllis2017predictor}, we can adjust the state predictors %used in the control design 
to obtain an output-feedback controller. 
\end{enumerate}
The proposed framework allows for a ``plug-and-play"-like approach to control design since additional subsystems satisfying similar conditions can be added to the network using the same procedure. 
% Moreover, it offers interesting perspectives as it can be applied to different classes of interconnected systems. A promising extension has, for instance, been suggested in~\cite{xu2023stabilization} for parabolic systems. 

%%%%%%%%%%%%%%%%%%%%%%%%%%%%%%%%%%%%%%%%%%%%%%
\section{State-feedback controller} \label{Chap_3_Sec_SF}
%%%%%%%%%%%%%%%%%%%%%%%%%%%%%%%%%%%%%
\subsection{Backstepping transformations}
The first objective before applying our recursive control strategy is to simplify the structure of the interconnected system~\eqref{Chap_3_eq_init_PDE_u}-\eqref{Chap_3_eq_init_bound_v} in order to ‘‘clear the actuation path" of each subsystem by removing the local terms initially present in equation~\eqref{Chap_3_eq_init_PDE_u}. \modif{In the case of two subsystems, this was done in~\cite{auriol2022robust} using a specific backstepping transformation adjusted from~\cite{hu2015boundary}, since due to the interconnection between the $i^{th}$ subsystem and the downstream subsystem $(i+1)$, the backstepping transformation given in~\cite{hu2015boundary} displays additional terms (depending on the state $v_{i+1}(t,0)$) that can cause causality issues when designing the control law. Unfortunately, for more than two subsystems, they cannot be straightforwardly removed by adjusting the transformation given in~\cite{auriol2022robust} without adding stringent conditions on the boundary coupling terms (more restrictive than Assumption~\ref{Chap_3_ass-control} and Assumption~\ref{Chap_3_ass-obs}).}
%overcomes this limitation (in the case of two subsystems) by adding to the backstepping transformation from~\cite{hu2015boundary} an additional component that depends on the downstream state $u_{i+1}$ and $v_{i+1}$. However, extending such a transformation to more than two subsystems requires (for each subsystem) the addition of components in the backstepping transformation that depend on all the downstream states. This leads to complex, intricate kernel equations for which we could only prove the well-posedness by adding stringent conditions on the boundary coupling terms (more restrictive than Assumption~\ref{Chap_3_ass-control} and Assumption~\ref{Chap_3_ass-obs}). These observations are similar to what was already noticed in~\cite{auriol2020output} when applying this class of integral transformations to a chain of scalar subsystems.}
\modif{To avoid such conservative conditions, we consider a new class of transformations. For each subsystem $i$, we combine a classical backstepping Volterra transformation (inspired from~\cite{auriol2016minimum}) with an integral term that depends on delayed values of the downstream state $v_{i+1}(t,0)$.}
More precisely, for all $i \in \{1,\hdots,N\}$, and all $t\geq \frac{1}{\lambda_i^1}$, we consider the integral transformation defined by 
\begin{align}
    &\alpha_i(t,x)=u_i(t,x)+\int_0^{\frac{x}{\lambda_i^1}} F_{i}(x,y) v_{i+1}(t-y,0)dy \nonumber \\&+\int_x^1 K_i^{uu}(x,y)u_i(t,y)+K_i^{uv}(x,y)v_i(t,y)dy, \label{Chap_3_eq_integral_time_terms} \\
    &\beta_i(t,x)=v_i(t,x)+\int_x^1 K_i^{vu}(x,y)u_i(t,y)dy\nonumber \\
    &\quad \quad +\int_x^1K_i^{vv}(x,y)v_i(t,y)dy,\label{Chap_3_eq_integral_time_terms_2}
\end{align}
where the kernels $K_i^{\cdot\cdot}$ are piecewise continuous functions defined on $\mathcal{T}_u=\{(x,y) \in [0,1]^2~|~x \leq y\}$, while the kernels $F_i$ are piecewise continuous functions defined on the triangular domain $\{(x,y) \in [0,1] \times [0,\frac{1}{\lambda_i^1}],~y \leq \frac{x}{\lambda_i^1}\}$. By convention $F_{N+1}=0$.  The kernels  $K_i^{\cdot\cdot}$ and $K_i^{\cdot\cdot}$ verify the following set of PDEs
\modif{
\begin{align}
    \Lambda_i \partial_x K_i +\partial_y K_i \Lambda_i &= - K_i\Sigma_i(y)+\left(\begin{smallmatrix}G_i(x) & 0 \\ 0 & 0\end{smallmatrix}\right)K_i, \label{Chap_3 eq_kernel_K_i_uu} 
\end{align}
where $\Lambda_i = \text{diag} (\Lambda_i^+, -\Lambda_i^-)$, $\Sigma_i = \left(\begin{smallmatrix} \Sigma_i^{++}&\Sigma_i^{+-}\\\Sigma_i^{-+}&\Sigma_i^{--}\end{smallmatrix}\right)$ and $K_i = \left(\begin{smallmatrix} K_i^{uu}&K_i^{uv}\\K_i^{vu}&K_i^{vv}\end{smallmatrix}\right)$
and with the boundary conditions
% \small
% \begin{align}
% &\Lambda^+_i \partial_x K_i^{uu}(x,y)+\partial_y K_i^{uu}(x,y)\Lambda_i^+=-K_i^{uu}(x,y)\Sigma^{++}_i(y)\nonumber \\&-K_i^{uv}(x,y)\Sigma^{-+}_i(y)+G_i(x)K_i^{uu}(x,y),  \\
% &\Lambda^+_i \partial_x K_i^{uv}(x,y)-\partial_y K_i^{uv}(x,y)\Lambda_i^-=-K_i^{uu}(x,y)\Sigma^{+-}_i(y)\nonumber \\
% &-K_i^{uv}(x,y)\Sigma^{--}_i(y)+G_i(x)K_i^{uv}(x,y), \\
% &\Lambda^-_i \partial_x K_i^{vu}(x,y)-\partial_y K_i^{vu}(x,y)\Lambda_i^+=K_i^{vu}(x,y)\Sigma^{++}_i(y)\nonumber \\
% &+K_i^{vv}(x,y)\Sigma^{-+}_i(y),  \\
% &\Lambda^-_i \partial_x K_i^{vv}(x,y)+\partial_y K_i^{vv}(x,y)\Lambda_i^-=K_i^{vu}(x,y)\Sigma^{+-}_i(y)\nonumber \\
% &+K_i^{vv}(x,y)\Sigma^{--}_i(y),\label{Chap_3 eq_kernel_K_i_vv} 
% \end{align}
% \normalsize
%with the boundary conditions
\begin{align}
    &\Lambda_i K_i(x,x) - K_i(x,x)\Lambda_i = \Sigma_i(x)-\left(\begin{smallmatrix}G_i(x) & 0 \\ 0 & 0\end{smallmatrix}\right),\label{Chap_3_ker_K_BC_1}\\
&K_i^{uu}(x,1)\Lambda_i^+=K_i^{uv}(x,1)\Lambda^-_iR_{ii},\label{Chap_3_ker_K_2}
\end{align}}
% \begin{align}
% &\Lambda_i^+K_i^{uu}(x,x)-K_i^{uu}(x,x)\Lambda_i^+=\Sigma_i^{++}(x)-G_i(x), \\
% &\Lambda_i^+K_i^{uv}(x,x)+K_i^{uv}(x,x)\Lambda_i^-=\Sigma_i^{+-}(x),\\
% &\Lambda_i^-K_i^{vu}(x,x)+K_i^{vu}(x,x)\Lambda_i^+=-\Sigma_i^{-+}(x),\\
% &\Lambda_i^-K_i^{vv}(x,x)-K_i^{vv}(x,x)\Lambda_i^-=-\Sigma_i^{--}(x),\\
% &K_i^{uu}(x,1)\Lambda_i^+=K_i^{uv}(x,1)\Lambda^-_iR_{ii},\label{Chap_3_ker_K_2}
% \end{align}
where $G_i(x)$ is a piecewise continuous \textbf{strictly upper-triangular} matrix function defined on $[0,1]$ through the first boundary condition of~\eqref{Chap_3_ker_K_BC_1}. More precisely, for all $1\leq k,\ell \leq n_i $, the boundary condition $\Lambda_i^+K_i^{uu}(x,x)-K_i^{uu}(x,x)\Lambda_i^+=\Sigma_i^{++}(x)-G_i(x)$ rewrites 
\begin{align}
&(G_i(x))_{k\ell}=(\Sigma_i^{++})_{k\ell}+ (\lambda_i^\ell- \lambda_i^k)(K_i^{uu}(x,x))_{k\ell}, \quad \text{if $k \leq \ell$}, \nonumber \\
&(K_i^{uu}(x,x))_{k\ell}=\frac{(\Sigma_i^{++})_{k\ell}}{\lambda_i^k-\lambda_i^\ell}, \quad \text{if $k > \ell$}. \label{Chap_3_ker_K_last}
\end{align}
It is important to emphasize that the matrix $G_i$ is strictly upper-triangular since for $k=\ell$, we have $(G_i(x))_{k\ell}=0$ (as the diagonal entries of $\Sigma_i^{++}$ are equal to zero). To these boundary conditions we add arbitrary boundary conditions for  $(K_i^{vv}(0,y))_{k\ell}$ when $k < \ell$, and arbitrary conditions for $(K_i^{vv}(x,1))_{k\ell}$ when $\ell \leq k$. \modif{With these additional boundary conditions,} the set of kernel equations~\eqref{Chap_3 eq_kernel_K_i_uu}-\eqref{Chap_3_ker_K_last} admits a unique piecewise continuous solution~\cite{auriol2016minimum}. The kernels $F_i$ verify
\begin{align}
&\Lambda^+_i \partial_x F_i(x,y)+\partial_y F_i(x,y)=G_i(x)F_i(x,y), \label{Chap_3_F_PDE}\\
&F_i(x,0)=-K_i^{uv}(x,0)\Lambda^-_iR_{i,i+1}, \\
&(F_i(x,\frac{x}{\lambda_i^1}))_{k\ell}=0, \quad 1< k \leq n_{i},~1\leq \ell \leq m_{i+1}. \label{Chap_3_F_BC}
\end{align}
\modif{Applying~\cite[Theorem 3.2]{di2018stabilization} (on the triangular domain $\{(x,y) \in [0,1] \times [0,\frac{1}{\lambda_i^1}],~y \leq \frac{x}{\lambda_i^1}\}$), one can show that equations~\eqref{Chap_3_F_PDE}-\eqref{Chap_3_F_BC} admit a unique piecewise continuous solution.} The transformation~\eqref{Chap_3_eq_integral_time_terms}-\eqref{Chap_3_eq_integral_time_terms_2} is a Volterra transformation to which an affine term that depends on the state $v_{i+1}$ is added. Consequently, it is invertible \cite{yoshida1960lectures} and there exist piecewise continuous functions $L_i^{\cdot\cdot}$ defined on $\mathcal{T}_u$ and piecewise continuous functions $H_i^{\cdot}$ defined on the rectangular domain  $\{(x,y) \in [0,1] \times [0,\frac{1}{\lambda_i^1}]\}$ such that for all $t\geq \frac{1}{\lambda_i^1}$,
 \begin{align}
    &u_i(t,x)=\alpha_i(t,x)+\int_0^{\frac{1}{\lambda_i^1}} H^\alpha_{i}(x,y) v_{i+1}(t-y,0)dy\nonumber \\&+\int_x^1 L_i^{\alpha\alpha}(x,y)\alpha_i(t,y)+L_i^{\alpha \beta}(x,y)\beta_i(t,y)dy, \label{Chap_3_eq_integral_time_terms_inverse} \\
    &v_i(t,x)=\beta_i(t,x)+\int_0^{\frac{1}{\lambda_i^1}} H^\beta_{i}(x,y) v_{i+1}(t-y,0)dy\nonumber \\&+\int_x^1 L_i^{\beta\alpha}(x,y)\alpha_i(t,y)+L_i^{\beta \beta}(x,y)\beta_i(t,y)dy. \label{Chap_3_eq_integral_time_terms_inverse_2}
\end{align}
\modif{Compared to~\eqref{Chap_3_eq_integral_time_terms}-\eqref{Chap_3_eq_integral_time_terms_2}, the inverse transformation~\eqref{Chap_3_eq_integral_time_terms_inverse}-\eqref{Chap_3_eq_integral_time_terms_inverse_2} has integral components involving $v_{i+1}(t-y,0)$ on both equations. Moreover, the upper limits of these integrals are $\frac{1}{\lambda_i^1}$ instead of $\frac{x}{\lambda_i^1}$. This can be seen by applying the inverse Volterra transformation to the vector~$\left(\begin{smallmatrix}\int_0^{\frac{x}{\lambda_i^1}} F_{i}(x,y) v_{i+1}(t-y,0)dy \\ 0
\end{smallmatrix}\right)$, and applying Fubini's theorem. However, we emphasize that the kernels $H_i^\alpha$ and $H_i^\beta$ may be equal to zero on some parts of the rectangular domains $[0,1]\times[0,\frac{1}{\lambda_i^1}]$. }
\modif{\begin{remark}
    Interestingly, the transformation~\eqref{Chap_3_eq_integral_time_terms}-\eqref{Chap_3_eq_integral_time_terms_2} shares several features with the one introduced in~\cite{redaud2024domain}, as they both combine a classical backstepping Volterra transformation with a component that depends on delayed values of the (downstream)  state. However, their nature and purpose are fundamentally different: 
    \begin{itemize}
    \item In~\cite{redaud2024domain}, the objective is to map the initial PDE system to any target system with a similar structure but whose source terms can be arbitrarily chosen. The triangular time-affine component of the integral transformation is used to modify the remaining in-domain coupling terms in the target system; 
    \item Here, the component $\int_0^{\frac{x}{\lambda_i^1}}F_i(x,y)v_{i+1}(t-y,0)$ of the transformation~\eqref{Chap_3_eq_integral_time_terms} is used to avoid displaying in the actuation path of the $i^{th}$-subsystem additional terms depending on $v_{i+1}(t, 0)$. The kernels $F_i$ do not have a triangular structure, and the kernel equations are completely different from the ones given in~\cite{redaud2024domain}.
\end{itemize}
All in all, the transformation~\eqref{Chap_3_eq_integral_time_terms}-\eqref{Chap_3_eq_integral_time_terms_2} and the one introduced in~\cite{redaud2024domain} can be seen as analogous tools applied in different contexts to overcome some limitations of the classical backstepping Volterra transformations.
\end{remark}}
\color{black}
\subsection{Target system}
\color{black}
For all $t\geq \frac{1}{\lambda_i^1}$, the transformation~\eqref{Chap_3_eq_integral_time_terms}-\eqref{Chap_3_eq_integral_time_terms_2} maps the system~\eqref{Chap_3_eq_init_PDE_u}-\eqref{Chap_3_eq_init_bound_v} to the target system
\begin{align}
    \partial_t \alpha_i(t,x)+\Lambda_i^+ \partial_x \alpha_i(t,x)&=G_i(x) \alpha_i(t,x), \label{Chap_3_PDE_alpha}\\
\partial_t\beta_i(t,x)-\Lambda_i^-  \partial_x\beta_i(t,x)&=\bar G_i(x)\alpha_i(t,1)\nonumber \\
&+\bar f_i(x)v_{i+1}(t,0), \label{Chap_3_PDE_beta}
\end{align}
\modif{
with the boundary conditions 
\begin{align}
&\alpha_i(t,0) = Q_{i,i}v_i(t,0)+Q_{i,i-1}\alpha_{i-1}(t,1)\nonumber \\
&+\int_0^1 K_i^{uu}(0,y)u_i(t,y)+K_i^{uv}(0,y)v_i(t,y)dy \nonumber \\
&-Q_{i,i-1}\int_0^{\frac{1}{\lambda_{i-1}^1}}F_{i-1}(1,y)v_i(t-y,0)dy,
\label{Chap_3_bound_alpha}\\
&\beta_i(t,1) =R_{i,i} \alpha_i (t,1) +R_{i,i+1} v_{i+1}(t,0)\nonumber \\
&-R_{i,i}\int_0^{\frac{1}{\lambda_i^1}}F_i(1,y)v_{i+1}(t-y,0)dy,\label{Chap_3_bound_beta}
\end{align}
where} $\bar G_i(x)=K_i^{vv}(x,1)\Lambda_i^-R_{i,i}-K_i^{vu}(x,1)\Lambda_i^+$ and $\bar f_i(x)=K_i^{vv}(x,1)\Lambda_i^-R_{i,i+1}$. By convention, we have $F_0=0$ and $Q_{1,0}\alpha_{0}(t,1)=U(t)$. The in-domain coupling terms appearing in equation~\eqref{Chap_3_eq_init_PDE_u} now have a triangular structure. In equation~\eqref{Chap_3_eq_init_bound_v}, all the local terms have been replaced by non-local terms that depend on $\alpha_i(t,1)$ and $v_{i+1}(t-\frac{x}{\lambda_i^1},0)$. 
% This structure will simplify the design of the proposed recursive stabilizing control law. For instance, in the case of a single subsystem ($N=1$), it becomes straightforward to design a stabilizing boundary feedback control law by canceling all the terms at the actuated boundary~\eqref{Chap_3_bound_alpha} (as shown in~\cite{auriol2016minimum}). 
\modif{Using the inverse transformation~\eqref{Chap_3_eq_integral_time_terms_inverse}-\eqref{Chap_3_eq_integral_time_terms_inverse_2}, it may be possible to substitute the remaining  $u_i$ and $v_i$ terms that appear in the target system~\eqref{Chap_3_PDE_alpha}-\eqref{Chap_3_bound_beta} by $\alpha_i$- and $\beta_i$-terms. However, this transformation also induces the appearance of $v_{i+1}(t,0)$-terms. Although it is possible to apply the transformation~\eqref{Chap_3_PDE_alpha} recursively-\eqref{Chap_3_bound_beta} (till we reach the last subsystem $N$) to get rid of all these $v_j(t,0)$ terms, the resulting expression would be cumbersome and involve intricate sums depending on delayed version of the state $\alpha_j$ and $\beta_j$. Therefore, we decided not to express these terms as functions of $\alpha_i$ and $\beta_i$ to increase readability. This will not affect the proposed analysis.}
In the next sections, we state several elementary properties for the system~\eqref{Chap_3_PDE_alpha}-\eqref{Chap_3_bound_beta}. We will then combine these properties to design our recursive stabilizing controller. For $t>\max_i\frac{1}{\lambda_i^1}$ and all $1\leq i \leq N$, all the integral transformations~\eqref{Chap_3_eq_integral_time_terms}-\eqref{Chap_3_eq_integral_time_terms_2} are well defined.

%%%%%%%%%%%%%%%%%%%%%%%%%%%%%%%%%%%%%
\subsection{Output trajectory tracking} \label{Sec_tracking_ISS}
Consider the $i^{th}$ subsystem composing the interconnection~\eqref{Chap_3_PDE_alpha}-\eqref{Chap_3_bound_beta}. Let us define the virtual control input acting on this subsystem as  
\begin{align}
\hat U_i(t)=Q_{i,i-1}\alpha_{i-1}(t+\sum_{j=1}^{i-1}\frac{1}{\lambda^1_j},1). \label{Chap_3_virtual_input} 
\end{align}
This virtual control input represents the action of the upstream subsystem on the subsystem $i$. %We do not choose $\hat U_i(t)=Q_{i,i-1}\alpha_{i-1}(t,1)$ as the virtual input, to guarantee the causality of the final controller. %This will be made clear in Section~\ref{Sec_recursive_State_feedback}. 
%Indeed, 
The delay $\sum_{j=1}^{i-1}\frac{1}{\lambda^1_j}$ corresponds to the total largest transport time between the control input $U(t)$ and the subsystem~$i$. It reflects the fact that the control input cannot directly act on the subsystem $i$, but that its effect is subject to this delay. %$\sum_{j=1}^{i-1}\frac{1}{\lambda^1_j}$. 
Equation~\eqref{Chap_3_bound_alpha} rewrites 
\begin{align}
\alpha_i(t,0) &= Q_{i,i}v_i(t,0)+\hat U_i(t-\sum_{j=1}^{i-1}\frac{1}{\lambda^1_j})\nonumber \\
&+\int_0^1 K_i^{uu}(0,y)u_i(t,y)+K_i^{uv}(0,y)v_i(t,y)dy \nonumber \\
& -Q_{i,i-1}\int_0^{\frac{1}{\lambda_{i-1}^1}}F_{i-1}(1,y)v_i(t-y,0)dy. \label{Chap_3_eq_alpha_i_0_rewritten}
\end{align}
We have the following property that guarantees the possibility for each subsystem to track any arbitrary function as long as predictions of the different states are available 
\begin{property}\label{Chap_3_assum_tracking}
Consider the $i^{th}$ subsystem~\eqref{Chap_3_PDE_alpha}-\eqref{Chap_3_bound_beta} $(i\in \{1,\hdots,N-1\})$ and define $\zeta_i$ an arbitrary known $H^1([0,\infty),\mathbb{R}^{n_{i+1}})$ function. Assume that there exists $t_0>0$ such that for all $t>t_0$ and all $x \in [0,1]$, it is possible to obtain a $\sum_{j=1}^{i-1}\frac{1}{\lambda_j^1}$ -ahead of time prediction of the PDE states $u_i(t,x)$, $v_i(t,x)$, $\alpha_i(t,x)$, $\beta_i(t,x)$, i.e. there exist predictor functions $P_{u_i}$, $P_{v_i}$, $P_{\alpha_i}$ and $P_{\beta_i}$  such that for all $t>t_0$ and all $x \in [0,1]$,
$P_{u_i}(t,x)=u_i(t+\sum_{j=1}^{i-1}\frac{1}{\lambda^1_j},x),$ $P_{v_i}(t,x)=v_i(t+\sum_{j=1}^{i-1}\frac{1}{\lambda^1_j},x),$ $P_{\alpha_i}(t,x)=\alpha_i(t+\sum_{j=1}^{i-1}\frac{1}{\lambda^1_j},x),$ $P_{\beta_i}(t,x)=\beta_i(t+\sum_{j=1}^{i-1}\frac{1}{\lambda^1_j},x)$.
 Then, there exists a control law $\hat U_i(t)$ such that for any $t>t_0+\frac{1}{\lambda_i^1}$, we have $\alpha_i(t,1)=\zeta_i(t)$. Moreover, if $\zeta_i(t) \equiv 0$, and $v_{i+1}(t) \equiv 0$, then, such a control law exponentially stabilizes the $i^{th}$ subsystem. 
   \end{property}
   \begin{proof}
      The proof is inspired by \cite{hu2016control}. \modif{We want to find the virtual control input $\hat U_i$ such that the function $\alpha_i(t,1)$ (right output of this subsystem $i$) tracks the reference signal $\zeta_i$.}
       Let us first introduce the intermediate virtual control input $\hat U_i^{tr}(t)$ such that for all $t>t_0+\frac{1}{\lambda_i^1}$
      \begin{align}
           \hat U_i(t)&= \hat U_i^{tr}(t)-Q_{i,i}P_{v_i}(t,0)\nonumber \\
           &-\int_0^1 (K_i^{uu}(0,y)P_{u_i}(t,y)dy+K_i^{uv}(0,y)P_{v_i}(t,y)dy\nonumber \\
& +Q_{i,i-1}\int_0^{\frac{1}{\lambda_{i-1}^1}}F_{i-1}(1,y)P_{v_i}(t-y,0). \label{Chap_3_control_intermediate}
       \end{align}
       This gives $\alpha_i(t,0)=\hat U_i^{tr}(t-\sum_{j=1}^{i-1}\frac{1}{\lambda_j^1})$.
Due to Lemma~\ref{Chap_3_Lemma_Characteristics} (given in Appendix), the control law $(\hat U_i^{tr}(t))$ defined for all $1\leq j \leq n_i$ by
\begin{align}
    &(\hat U_i^{tr}(t))_{j}=\alpha_i^j(t+\sum_{j=1}^{i-1}\frac{1}{\lambda_j^1},0)=\zeta_i^j(t+\frac{1}{\lambda_i^j}+\sum_{j=1}^{i-1}\frac{1}{\lambda_j^1}) \nonumber \\
    &+\sum_{\ell=j+1}^{n_i}\int_0^{\frac{1}{\lambda_i^j}}(\check G_i(0,\nu))_{j\ell}\zeta_i^\ell(t+\sum_{j=1}^{i-1}\frac{1}{\lambda_j^1}+\nu)d\nu, \label{Chap_3_tracking_law}
\end{align}
guarantees $\alpha_i(t,1)=\zeta_i(t)$ for any $t\geq t_0+\frac{1}{\lambda_i^1}$. \modif{This expression shows that $\alpha_i(t,1)$ corresponds to a flat output~\cite{levine2011advances} that is used for trajectory planning (similarly to~\cite{meurer2009tracking}). The controller $\hat U_i$ can be seen as a flatness-based feedforward tracking controller. Note that Lemma~\ref{Chap_3_Lemma_Characteristics} provides flatness-based parametrization of the PDE state.}
\modif{Let us now consider that $\zeta_i(t) \equiv 0$, and $v_{i+1}(t) \equiv 0$. Then the $i^{\text{th}}$ subsystem with the control law~\eqref{Chap_3_control_intermediate} is now autonomous,  
% It rewrites 
% \begin{align*}
%     \partial_t \alpha_i(t,x)+\Lambda_i^+ \partial_x \alpha_i(t,x)&=G_i(x) \alpha_i(t,x), \\
% \partial_t\beta_i(t,x)-\Lambda_i^-  \partial_x\beta_i(t,x)&=\bar G_i(x)\alpha_i(t,1), 
% \end{align*}
with the boundary conditions $\alpha_i(t,0) = 0,$ and $\beta_i(t,1) =R_{i,i} \alpha_i (t,1).$
As shown in~\cite{auriol2016minimum}, this target system is exponentially stable (and even finite-time stable).}   $\blacksquare$
   \end{proof}
   The fact that we need future values of the functions $u_j$, $v_j$, $\alpha_j$, $\beta_j$ ($j\geq i$) is induced by the presence of the delay $\sum_{j=1}^{i-1} \frac{1}{\lambda_j^1}$ in the virtual control input $\hat U_i(t)$.  Due to the transport delay to go from the left boundary of the $\alpha_i$-PDE ($x=0$, where is located the virtual actuation) to its right boundary ($x=1$, where is defined the output we want to track), we also need future values of the reference signal $\zeta_i$. However, one can verify in the proof of Property~\ref{Chap_3_assum_tracking} that only $(t+\sum_{k=1}^{i}\frac{1}{\lambda_k^1})$-ahead of time values of $\zeta_i$ are required.  Finally, we emphasize that Property~\ref{Chap_3_assum_tracking} does not have to be satisfied for the last subsystem. 
%%%%%%%%%%%%%%%%%%%%%%%%%%%

%%%%%%%%%%%%%%%%%%%%%%%%%%%
\subsection{Input-to-State stability}
As explained in Section~\ref{Chap_3_rec_design}, the control framework we propose recursively stabilizes each subsystem, starting from the last one. %For a given subsystem, the corresponding control input corresponds to the reference signal the upstream subsystem has to track. 
However, to guarantee closed-loop stability of the whole chain, we need the following Input-to-State Stability (ISS) property for each subsystem. 
\begin{property}\label{Chap_3_assum_ISS}
Consider the $i^{th}$ subsystem $(i\in \{1,\hdots,N-1\})$ and consider that Property~\ref{Chap_3_assum_tracking} holds, where $\hat U_i(t)$ is defined by equation~\eqref{Chap_3_control_intermediate}. Then, there exist two constants $\kappa_i>0$ and $\eta_i>0$ such that for all  $t>t_0+\frac{1}{\lambda_i^1}+\frac{1}{\mu_i^1}$, we have 
\begin{align}
       ||(\alpha_i(t,\cdot),\beta_i(t,\cdot))||^2_{L^2} \leq \kappa_i\big(&||(\zeta_i)_{[t]}||^2_{L^2_{\eta_i}}+||(\zeta_i)_{[t]}||^2_{L^2_{-\eta_i}}\nonumber \\
       &+||(v_{i+1}(\cdot,0))_{[t]}||^2_{L^2_{\eta_i}}\big). \label{Chap_3_eq_ISS}
   \end{align}
   \end{property}
   \begin{proof}
   Due to Property~\ref{Chap_3_assum_tracking}, we have for all $t>t_0+ \frac{1}{\lambda_i^1}$, $\alpha_i(t,1)=\zeta_i(t)$. Applying the method of characteristics on equation~\eqref{Chap_3_PDE_beta}, \modif{we can express $\beta(t,x)$ as a delayed function of $\zeta_i(t)$ and $v_{i+1}(t,0)$}. Using the boundary condition~\eqref{Chap_3_bound_beta}, we obtain for all $j \in \{1, \hdots, N\}$, for all $t>t_0+ \frac{1}{\lambda_i^1}+\frac{1}{\mu_i^1}$
   \begin{align}
      & \beta_i^j(t,x)=\sum_{k=1}^{n_i} (R_{i,i})_{jk}\zeta_i(t-\frac{1-x}{\mu_i^j})\nonumber \\
       &+\sum_{k=1}^{m_{i+1}} (R_{i,i+1})_{jk}v^k_{i+1}(t-\frac{1-x}{\mu_i^j},0)-\sum_{k=1}^{n_i}\sum_{\ell=1}^{m_{i+1}}\nonumber\\& \int_0^{\frac{1}{\lambda_i^1}}(R_{i,i})_{jk}(F_i(1,y))_{k\ell}v_{i+1}^\ell(t-y-\frac{1-x}{\mu_i^j},0)dy\nonumber \\
       &+\sum_{k=1}^{n_i}\int_0^{\frac{1-x}{\mu_i^j}}(\bar G_i(x+\mu_i^j\nu))_{jk}\zeta_i^k(t-\nu)d\nu\nonumber \\
       &+\sum_{k=1}^{m_{i+1}}\int_0^{\frac{1-x}{\mu_i^j}}(\bar f_i(x+\mu_i^j\nu))_{jk}v_{i+1}^k(t-\nu,0)d\nu. \label{eq_beta_x}
   \end{align}
   Since the functions $\bar f_i$ and $\bar G_i$ are bounded, straightforward (but tedious), computations give the existence of a constant $K_{\beta_i}>0$ such that 
   \begin{align*}
       ||\beta_i(t,\cdot)||^2_{L^2} \leq K_{\beta_i}\big(||(\zeta_i)_{[t]}||^2_{L^2_{\frac{1}{\mu_i^1}}}+||(v_{i+1}(\cdot,0))_{[t]}||^2_{L^2_{\frac{1}{\mu_i^1}+\frac{1}{\lambda_i^1}}}\big).
   \end{align*}
   Similarly, we can show using Lemma~\ref{Chap_3_Lemma_Characteristics}, that there exists a constant $K_{\alpha_i}>0$ such that $
       ||\alpha_i(t,\cdot)||^2_{L^2} \leq K_{\alpha_i}||(\zeta_i)_{[t]}||^2_{L^2_{-\frac{1}{\lambda_i^1}}}.$ This concludes the proof.$\blacksquare$
   \end{proof}
   The right-hand side of equation~\eqref{Chap_3_eq_ISS} involves past and future values of the functions $\zeta_i$, which is not an issue from a stability perspective. Moreover, due to Property~\ref{Chap_3_assum_ISS},  the finite-time convergence to zero of the functions $\zeta_i$ and $v_{i+1}$ directly implies the finite-time stability of the state $(\alpha_i,\beta_i)$. 
%%%%%%%%%%%%%%%%%%%%%%%%%%%%%%%%%
\subsection{State prediction} \label{Chap_3_Sec_prediction}
The virtual control law given in Property~\ref{Chap_3_assum_tracking} requires the prediction of future values of the functions $\alpha_i$, $\beta_i$, $u_i$, and $v_i$. The following property states that it is possible to design such predictors.
\begin{property}\label{Chap_3_assum_prediction}
Consider the $i^{th}$ subsystem~\eqref{Chap_3_PDE_alpha}-\eqref{Chap_3_bound_beta} $(i\in \{1,\hdots,N\})$ with the virtual input $\hat U_i(t)$ defined in equation~\eqref{Chap_3_virtual_input}. For $t>\max_r\tau_r+\sum_{k=1}^{i-1}\frac{1}{\lambda_k^1}$, for all $x \in [0,1]$, and all $j \in \{i, \hdots, N\}$, it is possible to obtain a $\sum_{k=1}^{i-1}\frac{1}{\lambda_k^1}$-ahead of time prediction of the functions $u_j(t,x)$, $v_j(t,x)$, $\alpha_j(t,x)$, $\beta_j(t,x)$. More precisely there exist predictor functions $P_{u_j}$, $P_{v_j}$, $P_{\alpha_j}$, and $P_{\beta_j}$ \modif{that only depends on past values of the state}, such that for all $t>\max_r\tau_r+\sum_{k=1}^{i-1}\frac{1}{\lambda_k^1}$, for all $x \in [0,1]$,
$P_{u_j}(t,x)=u_j(t+\sum_{k=1}^{i-1}\frac{1}{\lambda^1_k},x),$
$P_{v_j}(t,x)=v_j(t+\sum_{k=1}^{i-1}\frac{1}{\lambda^1_k},x),$ 
$P_{\alpha_j}(t,x)=\alpha_j(t+\sum_{k=1}^{i-1}\frac{1}{\lambda^1_k},x),$
$P_{\beta_j}(t,x)=\beta_j(t+\sum_{k=1}^{i-1}\frac{1}{\lambda^1_k},x).$
\end{property}

 \begin{proof} 
 Consider the $i^{th}$ subsystem~\eqref{Chap_3_PDE_alpha}-\eqref{Chap_3_bound_beta} $(i\in \{1,\hdots,N\})$ with the virtual input $\hat U_i(t)$ defined in equation~\eqref{Chap_3_virtual_input}. Consider $j \in \{i, \hdots, N\}$. For $t>\max_r\tau_r+\sum_{k=1}^{i-1}\frac{1}{\lambda_k^1}$, we will design predictors for the functions $v_j(t,0)$, $\alpha_j(t,1)$ and $\alpha_j(t,0)$. From these predictors, it will be possible to predict the functions $u_j(t,x)$, $v_j(t,x)$, $\alpha_j(t,x)$, $\beta_j(t,x)$ ($x \in [0,1]$). 
Due to the backstepping transformation~\eqref{Chap_3_eq_integral_time_terms_inverse_2}, we have
\begin{align*}
    &v_j(t,0)=\beta_j(t,0)+\int_0^{\frac{1}{\lambda_j^1}} H_i^\beta(0,y)v_{j+1}(t-y,0)dy\\
    &+\int_0^1L_j^{\beta\alpha}(0,y)\alpha_j (t,y)+L_j^{\beta\beta}(0,y)\beta_j(t,y)dy.
\end{align*}
Combining equation~\eqref{Chap_3_eq_alpha_delay} and equation
\eqref{eq_beta_x} (where $\zeta_i=\alpha_i(t,1)$), we obtain, for all $1\leq k \leq m_j$,
\begin{align}
    v_j^k(t,0)&=\sum_{\ell=1}^{n_j}(R_{j,j})_{k\ell}\alpha_j^\ell(t-\frac{1}{\mu_j^k},1)+\sum_{\ell=1}^{m_{j+1}}(R_{j,j+1})_{k\ell}\nonumber \\
    & v_{j+1}^\ell(t-\frac{1}{\mu_j^k},0)+\sum_{\ell=1}^{n_j}\int_0^{\tau_j} (g_j^1)_{k\ell}(\nu)  \alpha_j^\ell(t-\nu,0)d\nu\nonumber \\
    &+\sum_{\ell=1}^{m_{j+1}}\int_0^{\tau_j} (g_j^2)_{k\ell}(\nu) v_{j+1}^\ell(t-\nu,0)d\nu, \label{Chap_3_eq_neutral_v}
\end{align}
where $g_i^1$ and $g_i^2$ are piecewise continuous functions. We do not give their explicit expression for the sake of concision. We recall that by convention $v_{N+1}(t,0) \equiv 0$. Consider now equation~\eqref{Chap_3_eq_alpha_i_0_rewritten}. We can substitute the terms $u_i(t,\cdot)$ and $v_i(t,\cdot)$ that appear in the right-hand side of this equation by their expressions as functions of $\alpha_i(t,\cdot)$, $\beta_i(t,\cdot)$ and $v_{i+1}(t,0)$ using the inverse transformations~\eqref{Chap_3_eq_integral_time_terms_inverse}-\eqref{Chap_3_eq_integral_time_terms_inverse_2}. Then, applying the method of characteristics (see~\cite{auriol2019explicit,auriol2021stabilization} and equations~\eqref{Chap_3_eq_alpha_delay} and~\eqref{eq_beta_x}), \modif{we can express $\alpha(t,0)$ as a delayed function of $\alpha(t,0), \alpha(t,1)$ and $v(t,0)$}. We obtain for $j>i$,
\begin{align}
    \alpha_j^k(t,0)&=\sum_{\ell=1}^{m_j}(Q_{j,j})_{k\ell}v_j^\ell(t,0)+\sum_{\ell=1}^{n_{j-1}}(Q_{j,j-1})_{k\ell}\alpha^\ell_{j-1}(t,1)\nonumber \\
    &+\sum_{\ell=1}^{n_j}\int_0^{\tau_j} (h_i^2)_{k\ell}(\nu) \alpha_j^\ell(t-\nu,0)d\nu \nonumber \\
&+\sum_{\ell=1}^{m_{j+1}}\int_0^{\tau_j} (h_j^2)_{k\ell}(\nu) v_{j+1}^\ell(t-\nu,0)d\nu\nonumber \\
&+\sum_{\ell=1}^{m_{j}}\int_0^{\tau_{j-1}} (h_j^3)_{k\ell}(\nu) v_{j}^\ell(t-\nu,0)d\nu,\label{Chap_3_eq_neutral_alpha}
\end{align}
where $h_j^1$, $h_j^2$, and $h_j^3$  are piecewise continuous functions. We do not give their explicit expression for the sake of concision.  If $j=i$, the term $\sum_{\ell=1}^{n_{i-1}}(Q_{i,i-1})_{k\ell}\alpha^\ell_{i-1}(t,1)$ is replaced by $\sum_{\ell=1}^{n_{j-1}}(Q_{i,i-1})_{k\ell}(\hat U_i(t-\sum_{r=1}^{i-1}\frac{1}{\lambda_r^1}))_\ell$. Inspired by~\cite{bekiaris2014simultaneous,bresch2016prediction,auriol2020CDCunderactuation,auriol2022robust}, we respectively define for $t \geq \max_r\tau_r+\sum_{r=1}^{i-1}\frac{1}{\lambda_r^1}$, $k \in \{1,\hdots,n_j\}$, $\ell \in \{1,\hdots,m_j\}$  and $s \in [t-\max_r\tau_r-\sum_{r=1}^{i-1}\frac{1}{\lambda_r^1},t]$, the functions $P_{\alpha^0_j}^k(t,s)$, $P_{\alpha^1_j}^k(t,s)$, and $P_{v^0_j}^\ell(t,s)$ as the \textbf{state predictions} of $\alpha_j^k(t,0)$, $\alpha_j^k(t,1)$, and $v_\ell^k(t,0)$ ahead a time $\sum_{r=1}^{i-1}\frac{1}{\lambda_r^1}$. They are \emph{explicitly} defined by equations~\eqref{Chap_3_eq:def-P_alpha_0}-\eqref{Chap_3_eq:def-P_v_0} with the convention $\sum_{\ell=1}^{n_{i-1}}(Q_{i,i-1})_{kq}P_{\alpha_{i-1}^1}^q(t,s)=\hat U^q_i(s)$.  \modif{We write the predictors as functions of two arguments to emphasize that the predictions should be computed by incorporating delayed states
available at time $t$ to improve its robustness in practice. It should be noticed that the expressions of these predictors are causal as they only depend on past values of the functions $\alpha_j^k(t,0)$, $\alpha_j^k(t,1)$ and $v_j^k(t,0)$}. 
\begin{table*}
\begin{align}
	& P^k_{\alpha_j^0}(t,s) =\left\{ 
		\begin{aligned}
			& \alpha^k_j(s+\sum_{r=1}^{i-1}\frac{1}{\lambda_r^1},0) \, \hspace{0.25cm} \mbox{if } s \in [t-\max_r\tau_r-\sum_{r=1}^{i-1}\frac{1}{\lambda^1_r},t-\sum_{r=1}^{i-1}\frac{1}{\lambda_r^1}] \\
			& \sum_{q=1}^{m_j}(Q_{j,j})_{kq}P_{v_j^0}^q(t,s)+\sum_{q=1}^{n_{j-1}}(Q_{j,j-1})_{kq}P_{\alpha_{j-1}^1}^q(t,s)+\sum_{q=1}^{n_j}\int_0^{\tau_j} (h_i^1)_{kq}(\nu) P_{\alpha_j^0}^q(t,s-\nu)d\nu  \\
&+\sum_{q=1}^{m_{j+1}}\int_0^{\tau_j} (h_j^2)_{kq}(\nu) P_{v_{j+1}^0}^q(t,s-\nu)d\nu+\sum_{q=1}^{m_{j}}\int_0^{\tau_{j-1}} (h_j^3)_{kq}(\nu) P_{v_j^0}^q(t,s-\nu)d\nu,~\mbox{ otherwise,}
		\end{aligned}
	\right.\label{Chap_3_eq:def-P_alpha_0}
\\
	& P^k_{\alpha_j^1}(t,s) =\left\{ 
		\begin{aligned}
		&\alpha^k_j(s+\sum_{r=1}^{i-1}\frac{1}{\lambda_r^1},1) \, \hspace{0.25cm} \mbox{if } s \in [t-\max_r\tau_r-\sum_{r=1}^{i-1}\frac{1}{\lambda^1_r},t-\sum_{r=1}^{i-1}\frac{1}{\lambda^1_r}] \\
			& P^k_{\alpha_j^0}(t,s-\frac{1}{\lambda_j^k})+\sum_{q=k+1}^{n_j}\int_0^{\frac{1}{\lambda_j^k}}(\tilde G_j(1-\lambda_j^k\nu))_{kq}P^q_{\alpha_j^0}(t,s-\nu)d\nu,~\mbox{ otherwise,}
		\end{aligned}
	\right. \label{Chap_3_eq:def-P_alpha_1}
\\
	& P^\ell_{v_j^0}(t,s) =\left\{ 
		\begin{aligned}
			& v_j^\ell(s+\sum_{r=1}^{i-1}\frac{1}{\lambda_r^1},0) \, \hspace{0.25cm} \mbox{if } s \in [t-\max_r\tau_r-\sum_{r=1}^{i-1}\frac{1}{\lambda_r^1},t-\sum_{r=1}^{i-1}\frac{1}{\lambda^1_r}] \\
			& \sum_{q=1}^{n_j}(R_{j,j})_{\ell q}P_{\alpha_j^1}^q(t,s-\frac{1}{\mu_j^\ell})+\sum_{q=1}^{m_{j+1}}(R_{j,j+1})_{\ell q}P_{v_{j+1}^0}^q(t,s-\frac{1}{\mu_j^\ell})+\sum_{q=1}^{n_j}\int_0^{\tau_j} (g_j^1)_{\ell q}(\nu) P_{\alpha_j^1}^q(t,s-\nu)d\nu  \\
    & +\sum_{q=1}^{m_{j+1}}\int_0^{\tau_j} (g_j^2)_{\ell q}(\nu) P_{v_{j+1}^0}^q(t,s-\nu)d\nu,~\mbox{ otherwise,}
		\end{aligned}
	\right.\label{Chap_3_eq:def-P_v_0}
\end{align}
\end{table*}
 % Though the definitions~\eqref{Chap_3_eq:def-P_alpha_0}-\eqref{Chap_3_eq:def-P_v_0} are implicit, through integral relations of Volterra type, the predictors are well-defined and unique. 

% From these definitions, we immediately have
% \begin{align*}
% &P^k_{\alpha^0_j}(t,s)= \alpha^k_j(s+\sum_{r=1}^{i-1}\frac{1}{\lambda_r^1},0),~\text{$s \in [t-\max_r \tau_r-\sum_{r=1}^{i-1}\frac{1}{\lambda_r^1},t]$}, \\
% &P^k_{\alpha_j^1}(t,s)=\alpha_j^k(s+\sum_{r=1}^{i-1}\frac{1}{\lambda_r^1},1),~\text{$s \in[t-\max_r \tau_r-\sum_{r=1}^{i-1}\frac{1}{\lambda_r^1},t]$},\\
% &P^\ell_{v_j^0}(t,s)=v^\ell_j(s+\sum_{r=1}^{i-1}\frac{1}{\lambda_r^1},0),~\text{$s\in [t-\max_r \tau_r-\sum_{r=1}^{i-1}\frac{1}{\lambda_r^1},t]$}.
% \end{align*} 
From the predictors~\eqref{Chap_3_eq:def-P_alpha_0}-\eqref{Chap_3_eq:def-P_v_0}, it is possible to apply equations~\eqref{eq_beta_x}  and equation~\eqref{Chap_3_eq_alpha_delay}, to obtain the corresponding state predictions for the states $\alpha_j(t,x)$ and $\beta_j(t,x)$. Finally, using the transformations~\eqref{Chap_3_eq_integral_time_terms_inverse}-\eqref{Chap_3_eq_integral_time_terms_inverse_2}, we obtain the predictions of the state $u_j(t,x)$ and $v_j(t,x)$. $\blacksquare$
\end{proof}
Note that the definitions of the predictors implicitly depend on the initial subsystem $i$ we consider. Indeed, the different time horizons depend on the parameter $i$. We chose to omit this dependency as we believe the notations are sufficiently heavy.
\modif{The definitions of the predictors rely on a time-delay representation (inspired from~\cite{auriol2019explicit}) of the interconnected system~\eqref{Chap_3_eq_init_PDE_u}-\eqref{Chap_3_eq_init_bound_v}. In this context, the functions $\alpha_i(t,1)$, $\alpha_i(t,0)$ and $v_i(t,0)$ can be seen as a (quasi) flatness-based parametrization of~\eqref{Chap_3_eq_init_PDE_u}-\eqref{Chap_3_eq_init_bound_v}. Similar parameterizations have been used in the literature for flatness-based open-loop design, controllability analysis, and closed-loop design (see, e.g. \cite{woittennek2013flatness}  or~\cite{petit2001flatness}).}
   %%%%%%%%%%%%%%%%%%%%%%%%%
%%%%%%%%%%%%%%%%%%%%%%%%%%%%%%%%%%%%%
\subsection{Recursive state-feedback stabilization}\label{Sec_recursive_State_feedback}
We now have all the tools to apply our recursive dynamics interconnection framework
\begin{theorem}\label{Chap_3_th_stab_rec}
    For $i \in \{1, \hdots, N\}$, and for $t>\sum_{j=1}^N 2\tau_j$, define the sequences of functions $\zeta_i$, with $\zeta_N(t)=0$ and for all $i<N$
    \begin{align}
    &\zeta_i(t)=(Q^T_{i+1,i}(Q_{i+1,i}Q^T_{i+1,i})^{-1})\hat U_{i+1}(t-\sum_{j=1}^{i}\frac{1}{\lambda_j^1}), \label{Chap_3_eq_def_zeta_i}
    \end{align}
    where the functions $\hat U_i$ are defined by equations~\eqref{Chap_3_control_intermediate}-\eqref{Chap_3_tracking_law}, where the different predictors are given in Property~\ref{Chap_3_assum_prediction} (using the function $\hat U_i$ and equations~\eqref{Chap_3_eq:def-P_alpha_0}-\eqref{Chap_3_eq:def-P_v_0}). Then, the interconnected system~\eqref{Chap_3_eq_init_PDE_u}-\eqref{Chap_3_eq_init_bound_v} with the control law $U(t)=\hat U_1(t)$ is exponentially stable. Moreover, the equilibrium is reached in finite time.
\end{theorem}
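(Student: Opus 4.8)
The plan is to establish the result by two nested recursions that run in opposite directions along the chain: a \emph{forward} recursion (from the actuated subsystem $1$ to the extremity $N$) that proves exact output tracking $\alpha_i(t,1)=\zeta_i(t)$ for every subsystem, and a \emph{backward} recursion (from $N$ down to $1$) that propagates finite-time convergence via the ISS estimate of Property~\ref{Chap_3_assum_ISS}. Before either recursion, I would check that the cascade of control laws is well-posed and causal: the computation proceeds from the last subsystem downward, starting from $\zeta_N\equiv 0$ and $v_{N+1}\equiv 0$, so that $\hat U_N$ is obtained from~\eqref{Chap_3_control_intermediate}-\eqref{Chap_3_tracking_law} using the causal predictors of Property~\ref{Chap_3_assum_prediction}; this fixes $\zeta_{N-1}$ through~\eqref{Chap_3_eq_def_zeta_i}, hence $\hat U_{N-1}$, and so on down to $\hat U_1=U$. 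Since each predictor depends only on past values of the state, $U(t)=\hat U_1(t)$ is implementable.

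For the forward tracking recursion, the base case is subsystem $1$: as $Q_{1,0}=\text{Id}$ and $\alpha_0(t,1)=U(t)$, the virtual input $\hat U_1$ in~\eqref{Chap_3_virtual_input} is exactly the applied control, so Property~\ref{Chap_3_assum_tracking} yields $\alpha_1(t,1)=\zeta_1(t)$ once $t$ exceeds the relevant delay. For the inductive step, assume $\alpha_{i-1}(s,1)=\zeta_{i-1}(s)$. Substituting this into~\eqref{Chap_3_virtual_input} gives $\hat U_i(t)=Q_{i,i-1}\zeta_{i-1}\big(t+\sum_{j=1}^{i-1}\tfrac{1}{\lambda_j^1}\big)$, and inserting the definition~\eqref{Chap_3_eq_def_zeta_i} of $\zeta_{i-1}$ together with the right-inverse identity $Q_{i,i-1}Q_{i,i-1}^\top(Q_{i,i-1}Q_{i,i-1}^\top)^{-1}=\text{Id}$ (valid under Assumption~\ref{Chap_3_ass-control}) shows that the realized virtual input equals the designed $\hat U_i$. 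Property~\ref{Chap_3_assum_tracking} then delivers $\alpha_i(t,1)=\zeta_i(t)$, closing the induction so that tracking holds for all $i$ once $t$ is past the cumulative delay.

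For the backward convergence recursion, I would start from subsystem $N$: since $\zeta_N\equiv 0$ and $v_{N+1}\equiv 0$, the finite-time part of Property~\ref{Chap_3_assum_tracking} forces $(\alpha_N,\beta_N)\to 0$ in finite time, and the inverse transformation~\eqref{Chap_3_eq_integral_time_terms_inverse}-\eqref{Chap_3_eq_integral_time_terms_inverse_2} (with $F_{N+1}=0$, $v_{N+1}=0$) transfers this to $(u_N,v_N)\to 0$, in particular $v_N(\cdot,0)\to 0$. Once subsystem $N$ is at rest, its predictors vanish, hence $\hat U_N\to 0$ and therefore $\zeta_{N-1}\to 0$ by~\eqref{Chap_3_eq_def_zeta_i}; simultaneously the disturbance $v_N(\cdot,0)$ feeding subsystem $N-1$ vanishes. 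Applying the ISS estimate~\eqref{Chap_3_eq_ISS} of Property~\ref{Chap_3_assum_ISS} with both inputs decaying to zero in finite time drives $(\alpha_{N-1},\beta_{N-1})$, and hence $(u_{N-1},v_{N-1})$, to zero in additional finite time. Iterating down to subsystem $1$ yields finite-time convergence of the entire state; accumulating the $N$ successive settling times produces the explicit horizon $\sum_{j=1}^N 2\tau_j$, and finite-time stability of this linear system implies the claimed exponential stability.

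The main obstacle is the careful bookkeeping of delays and prediction horizons needed to make the two recursions genuinely well-founded rather than circular: the forward recursion must fully establish exact tracking \emph{before} the backward recursion exploits the vanishing of $\zeta_i$ and $v_{i+1}$, and one must verify that the inputs to subsystem $i$ --- which are not identically zero but only become zero after a delay --- are correctly absorbed by the ISS bound~\eqref{Chap_3_eq_ISS} so that their finite-time decay propagates into finite-time decay of the state. Checking that the accumulated settling time is bounded by $\sum_{j=1}^N 2\tau_j$ and that no hidden causality violation appears when the predictor expressions~\eqref{Chap_3_eq:def-P_alpha_0}-\eqref{Chap_3_eq:def-P_v_0} are substituted into the control law is the most delicate portion of the argument.
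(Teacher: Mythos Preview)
Your proposal is correct and follows essentially the same two-step recursive strategy as the paper: a forward recursion establishing $\alpha_i(t,1)=\zeta_i(t)$ via Property~\ref{Chap_3_assum_tracking} and the right-inverse from Assumption~\ref{Chap_3_ass-control}, followed by a backward recursion propagating finite-time convergence through Property~\ref{Chap_3_assum_ISS}. The only ingredient the paper adds that you do not mention is a short well-posedness argument for the closed-loop PDE system (via admissibility of the control operator or an adaptation of \cite[Theorem~A.1]{bastin2016stability}); the paper also refrains from asserting the explicit settling horizon $\sum_{j=1}^N 2\tau_j$, instead working with a generic ``large enough'' time $T$, so your sharper accounting of the accumulated delay would need separate justification.
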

\begin{proof}
First observe that the matrices $Q_{i+1,i}^T(Q^T_{i+1,i}Q^T_{i+1,i})^{-1}$ are well defined due to Assumption~\ref{Chap_3_ass-control}. The quantity $\zeta_i(t+\sum_{j=1}^i \frac{1}{\lambda_i^j})$ that appears in the proof of Property~\ref{Chap_3_assum_tracking} can be explicitly computed from $\hat U_{i+1}(t)$. Then, the sequences $\zeta_i$ and $\hat U_i$ are well defined (since equations~\eqref{Chap_3_eq:def-P_alpha_0}-\eqref{Chap_3_eq:def-P_v_0} are always well defined). Consequently, the control input $U(t)$ is well-defined and causal. 

Next, we briefly show that the closed-loop system~\eqref{Chap_3_eq_init_PDE_u}-\eqref{Chap_3_eq_init_bound_v} with the control input $U(t)$ is well-posed. This can be done either by considering the admissibility of the control operator~\cite{coron2016stabilization} (the control law is continuous in time), or by adjusting the proof of Theorem~\cite[Theorem A.1]{bastin2016stability} (that is based on Lumer-Philipps theorem). Indeed, the different components of the proposed control input $U(t)$ (including the predictors) can be expressed as delayed values of the boundary states of the system (as $v_i(t,0)$) or delayed values of themselves. Such delayed values, could then be expressed using PDEs (after tedious computations). %The 'if' condition in the definition of the predictor  Such delayed values, could then be expressed using PDEs. For instance, consider the predictor $P^k_{\alpha_j^1}$ defined by~\eqref{Chap_3_eq:def-P_alpha_1} the term $\int_0^{\tau_j} h_i(\nu)P^q_{\alpha_j^0}(t,s-\nu)$ could be express as $w(t,1)$, where $w$ is the solution of the PDE
%\begin{align*}
   % \partial_t w -\frac{1}{\tau_i} 
%\end{align*}Therefore, these functions can be rewritten noticing that 

We now need to prove that the proposed control law stabilizes the system. To ease the computations, the parameter $T$ (that will be  overloaded in the rest of the proof) denotes a finite time large enough to guarantee that the different predictors and tracking controllers are well-defined. %Note that for a subsystem $i$, the ‘‘predictors" defined in the proof of Property~\ref{Chap_3_assum_prediction} correspond to exact predictions of the different states only if the subsystem is subject to the virtual input $\hat U_i(t)$, i.e., only if $Q_{i,i+1}\alpha_{i-1}(t+\sum_{j=1}^{i-1}\frac{1}{\lambda_j^1},1)=\hat U_i(t)$. 
Consider the first subsystem ($i=1$) with the control law $U(t)=\hat U_1(t)$. For $i=1$, equation~\eqref{Chap_3_control_intermediate} and equation~\eqref{Chap_3_tracking_law} do not require any state predictions but can be computed using current values of the different functions. Then, using Property~\ref{Chap_3_assum_tracking}, we have that $\alpha_1(t,1)=\zeta_1(t)$ for $t>T$. Consequently, $Q_{2,1}\alpha_1(t,1)=\hat U_2(t-\frac{1}{\lambda_1^1}).$ Therefore the functions defined through~equations~\eqref{Chap_3_eq:def-P_alpha_0}-\eqref{Chap_3_eq:def-P_v_0} corresponds to exact $\frac{1}{\lambda_1^1}$-ahead of time predictions of the real states. Thus, Property~\ref{Chap_3_assum_tracking} implies that $\alpha_2(t,1)=\zeta_2(t)$ after a finite time $T$. Iterating the procedure, we obtain that after a finite time $T$, for all $i\in \{1, \hdots, N\}$,  $\alpha_i(t,1)=\zeta_i$ %and consequently $Q_{i,i-1}\alpha_{i-1}(t,0)=\hat U_i(t-\sum_{j=1}^{i-1}\frac{1}{\lambda_j^1}).$ 
Consider now the last subsystem ($i=N$). Since $\alpha_N(t,1)=\zeta_N=0$, the functions $\alpha_N(t,x)$ and $\beta_N(t,x)$ converge to zero in finite time. Applying Property~\ref{Chap_3_assum_ISS}, we obtain the convergence to zero of the functions $\alpha_{N-1}(t,x)$ and $\beta_{N-1}(t,x)$ in finite time. Iterating the procedure, all the states $(\alpha_i,\beta_i)$ converge to zero in finite time. Using the inverse backstepping transformations~\eqref{Chap_3_eq_integral_time_terms_inverse}-\eqref{Chap_3_eq_integral_time_terms_inverse_2}, we obtain that the system~\eqref{Chap_3_eq_init_PDE_u}-\eqref{Chap_3_eq_init_bound_v} reaches its equilibrium in finite-time. The well-posedness of the closed-loop system implies its exponential stability.
%Let us now show that the closed-loop system is exponentially stable.
%To prove that the system is exponentially stable, we can combine the well-posedness of the system %with Theorem~\ref{theorem_equiv_norm}
%and Property~\ref{Chap_3_assum_ISS}. We omit the complete proof (which requires tedious yet classical developments) for the sake of concision. {\color{red} Reprendre le côté conv. exp.} 
$\blacksquare$
%From the definitions of the predictors ~\eqref{Chap_3_eq:def-P_alpha_0}-\eqref{Chap_3_eq:def-P_v_0}, their $L^2$ equations~\eqref{Chap_3_eq_alpha_delay}-\eqref{Chap_3_eq_neutral_alpha}
%Considering the extended state composed of $(\alpha_i(t,0),\alpha_i(t,1),v_i(t,0))$ the definitions of the predictors (equations~\eqref{Chap_3_eq:def-P_alpha_0}-\eqref{Chap_3_eq:def-P_v_0}) and of the virtual control laws $\hat U_i$,  %Borner les prédicteurs, puis les lois de commande...
\end{proof}
%%%%%%%%%%%%%%%%%%%%%%%%%%%%%%%%%
One major advantage of the proposed framework and of the recursive design presented in Theorem~\ref{Chap_3_th_stab_rec} is that it can easily be extended to different classes of subsystems (as ODEs, for instance), as long as it is possible to derive analogous properties to  Property~\ref{Chap_3_assum_tracking}, Property~\ref{Chap_3_assum_ISS} and Property~\ref{Chap_3_assum_prediction}.

\begin{remark} \label{Chap_3_Remark_delay}
The state-feedback controller designed in Theorem~\ref{Chap_3_th_stab_rec} can be easily extended to the case of a delayed control input. Indeed, similarly to what has been done in the case of ODEs~\cite{krstic2008boundary}, one needs to consider an additional upstream subsystem corresponding to a pure transport equation, thus inducing a delay corresponding to the input delay.
\end{remark}

\modif{
\subsection{A remark on robustness and computational aspects} \label{Sec_filter}
%One must be aware that Theorem~\ref{Chap_3_th_stab_rec} completely neglects the robustness aspects of the system.
Although the controller designed in Theorem~\ref{Chap_3_th_stab_rec} fulfills the control objective and stabilizes the system~\eqref{Chap_3_eq_init_PDE_u}-\eqref{Chap_3_eq_init_bound_v}, it presents several drawbacks that could impact its implementation:
\begin{itemize}[label=\textbullet, labelindent=0pt, leftmargin=*]
    \item First, the proposed approach is based on an exact reconstruction of the state using state predictors. The computation of the predictors can be time-consuming (as the numerical complexity increases with the number of subsystems) and troublesome. Indeed, implicit expressions of prediction-based feedback sometimes lead to burdensome numerical procedures and induce poor robustness margins, as shown in~\cite{karafyllis2017predictor,mondie2003finite} for the ODE case. For fully actuated integral difference equations (but with a known delay in the control input), it has been shown in~\cite{auriol2022explicit} that it is possible to obtain explicit expressions of such predictors and consequently improve the efficiency and robustness of the proposed control design. We believe it may be possible to obtain such an explicit expression, even if the results from~\cite{auriol2022explicit} do not directly apply due to the underactuated configuration. Overall, it is essential to envision model reduction strategies in order to apply the proposed control law to real test cases. 
    \item Then, the proposed approach consists of recursively canceling all the boundary reflection terms for each subsystem to track the virtual input of the downstream subsystem. This may lead to vanishing robustness margins, as shown in~\cite{auriol2017delay}. Unfortunately, the robustification procedure proposed in~\cite{auriol2023robustification} cannot be directly applied since, due to the tracking part, our control law does not fit in the framework of the theorems given in~\cite{auriol2023robustification}. However, using the cascade structure of the problem, it should be possible (under Assumption~\ref{Ass_robustness}) to extend the results from~\cite{auriol2023robustification} to the system~\eqref{Chap_3_eq_init_PDE_u}-\eqref{Chap_3_eq_init_bound_v}. This solution was successfully tested in simulations.
\end{itemize}
}

%%%%%%%%%%%%%%%%%%%%%%%%%%%%%%%%%%%%%%%%%%%%%%%%%%%%%%%%%%%%%%%%%%%%%%%%%%%%%
\section{State estimation and output-feedback stabilization} \label{Chap_3_Sec_State_Estimation}

To design the recursive stabilizing controller we presented in Section~\ref{Chap_3_Sec_SF}, we need the knowledge of the states $u_i(t,x)$ and $v_i(t,x)$ all over the spatial domain $[0,1]$. Since the available measurement corresponds to $u_N(t,1)$, we must design a state observer. In this section, we show how to easily obtain estimated \textbf{delayed} values of these states. Adjusting the predictors introduced in Section~\ref{Chap_3_Sec_prediction}, it is then possible to reconstruct the desired states.

\subsection{Delayed interconnection}
 Inspired by~\cite{karafyllis2017predictor}, we consider a delayed version of the interconnected system~\eqref{Chap_3_eq_init_PDE_u}-\eqref{Chap_3_eq_init_bound_v}. Let us consider~$\tau>\sum_{j=1}^N\frac{1}{\lambda_j^1}>0$ a fixed, known delay. We define the \emph{$\tau$-delay operator~$\bar \cdot$}, such that for all functions $\gamma$ defined on $[0,+\infty)$, $\forall t>\tau, \bar \gamma(t)=\gamma(t-\tau)$. Using this operator, we can obtain the $\tau$-delayed version of system~\eqref{Chap_3_eq_init_PDE_u}-\eqref{Chap_3_eq_init_bound_v}. For all~$t\geq \tau$, we have:
\begin{align}
\partial_t \overline{u}_i(t,x)+\Lambda^+_i \partial_x \overline{u}_i(t,x)&=\Sigma^{++}_i(x) \overline{u}_i+\Sigma^{+-}_i(x) \overline{v}_i, \label{Chap_3_eq_delayed_PDE_u}\\
\partial_t  \overline{v}_i(t,x)-\Lambda^-_i  \partial_x \overline{v}_i(t,x)&=\Sigma^{-+}_i(x) \overline{u}_i+\Sigma^{--}_i(x) \overline{v}_i,
\end{align}
with the boundary conditions:
\begin{align}
\overline{u}_i(t,0) &= Q_{i,i}\overline{v}_i(t,0)+Q_{i,i-1}\overline{u}_{i-1}(t,1),
\label{Chap_3_eq_init_bound_u_delay}\\
\overline{v}_i(t,1) &=R_{i,i} \overline{u}_i (t,1) +R_{i,i+1} \overline{v}_{i+1}(t,0), \label{Chap_3_eq_init_bound_v_delay}
\end{align} 
where we still use the convention that $Q_{1,0}\bar u_0(t,0)=U(t-\tau)$ and $R_{N,N+1}=0$. 
The available measurement is now given as~$\bar y(t)=y(t-\tau)$. It implies that we know~$\tau$-ahead future values of the function~$\bar y$(t).
%Let us introduce the sequence $t_i$ defined by 
%\begin{align}
 %   t_i=\sum_{j=i+1}^N\frac{1}{\lambda^1_j}.
%\end{align}
%For a given subsystem~$i \in \{1,\hdots,N\}$, we define the virtual measurement~$\overline{y}_i$ as:
%$$
%\bar y_i(t)=u_i(t-t_i,1)=\bar u_i(t+\tau-t_i,1).
%$$
%This definition is causal as it only requires past values of the function~$\bar u_i(\cdot,1)$. Note that~$\bar y_N=\bar y$ is known on time interval~$[t,t+\tau]$.
Using the backstepping transformations~\eqref{Chap_3_eq_integral_time_terms}-\eqref{Chap_3_eq_integral_time_terms_2}, we can define the states $\bar \alpha_i(t,x)$ and $\bar \beta_i(t,x)$. They are solutions of~\eqref{Chap_3_PDE_alpha}-\eqref{Chap_3_bound_beta} with a $\tau$-delayed control input. 

\subsection{Estimation of the delayed states }
%{\color{red}This section shows how to estimate the delayed state $\bar u_i(t,x)$ and $\bar v_i(t,x)$  from the available measurements. We first show how to estimate the boundary functions $\bar \alpha_i(t,1)$ and $\bar v_i(t,0)$. These estimations will then be used to reconstruct the states $\bar u_i$ and $\bar v_i$. More precisely, we have the following property.}
We now  estimate the delayed state $\bar u_i(t,x)$ and $\bar v_i(t,x)$  from the available measurements.
\begin{lemma} \label{Chap_3_lemma:observability} %Define the sequence t_i=\sum_{j=i+1}^N\frac{1}{\lambda^1_j}
%Consider the $i^{th}$ subsystem~$i \in \{2,\hdots,N\}$, $t>0$ and assume that the functions $\alpha_i(\nu,1)$ and $\bar v_{i+1}(\nu,0)$ are known for all $\nu \in [t,t+\sum_{j=1}^i\frac{1}{\lambda^1_j}]$. Then, 
For all $i \in \{1,\hdots,N\}$, we can design exact state estimators $\hat \alpha_{i}(\cdot,1)$ and $\hat v_{i+1}(\cdot,0)$ that causally depend on the measurement~$y(t)$ such that for all $\nu \in [t,t+\sum_{j=1}^{i}\frac{1}{\lambda^1_j}]$, $\hat \alpha_{i}(t+\nu,1)=\bar \alpha_{i}(t+\nu,1)$ and $\hat v_{i+1}(t+\nu,0)=\bar v_{i+1}(t+\nu,0)$.
\end{lemma}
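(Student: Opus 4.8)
The plan is to reconstruct the delayed boundary traces recursively, moving \emph{upstream} from the measured subsystem $N$ towards subsystem $1$, exploiting the neutral (time-delay) representation of the boundary variables already derived in the proof of Property~\ref{Chap_3_assum_prediction} together with the left-invertibility of the junction matrices. The base case is $i=N$: since $R_{N,N+1}=0$ one has $F_N\equiv 0$, and the Volterra kernels in~\eqref{Chap_3_eq_integral_time_terms} are supported on $\{x\le y\}$, so the transformation reduces to the identity at $x=1$; hence $\bar\alpha_N(t,1)=\bar u_N(t,1)=\bar y(t)$. Because $\tau>\sum_{j=1}^N\frac{1}{\lambda_j^1}$, the delayed measurement is in fact known up to $\tau$ time units ahead (as $\bar y(t+\nu)=y(t+\nu-\tau)$ with $t+\nu-\tau\le t$), so I would simply set $\hat\alpha_N(\cdot,1)=\bar y$ on the whole window $[0,\sum_{j=1}^N\frac{1}{\lambda_j^1}]$ and $\hat v_{N+1}(\cdot,0)\equiv 0$.

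For the inductive step, suppose $\bar\alpha_{i+1}(\cdot,1)$ and $\bar v_{i+2}(\cdot,0)$ have been reconstructed on the window $[0,\sum_{j=1}^{i+1}\frac{1}{\lambda_j^1}]$. First, evaluating the neutral relation~\eqref{Chap_3_eq_neutral_v} with $j=i+1$ expresses $\bar v_{i+1}(\cdot,0)$ in terms of \emph{delayed} values of $\bar\alpha_{i+1}(\cdot,1)$ and $\bar v_{i+2}(\cdot,0)$, both available by hypothesis; this defines the causal estimator $\hat v_{i+1}(\cdot,0)$ without consuming any future budget. Second, I would invert the transport of the $\alpha$-PDE~\eqref{Chap_3_PDE_alpha} via Lemma~\ref{Chap_3_Lemma_Characteristics}, using the strict upper-triangularity of $G_{i+1}$ to solve component by component, and thereby recover the left trace $\bar\alpha_{i+1}(\cdot,0)$ from suitably time-advanced values of $\bar\alpha_{i+1}(\cdot,1)$. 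Third, in the neutral relation~\eqref{Chap_3_eq_neutral_alpha} written for $\bar\alpha_{i+1}(t,0)$, every term except $Q_{i+1,i}\bar\alpha_i(t,1)$ (recall $\bar\alpha_i(t,1)=\bar u_i(t,1)$) is now known; since $Q_{i+1,i}$ admits a left inverse by Assumption~\ref{Chap_3_ass-obs}, I solve for $\bar\alpha_i(t,1)$, which defines $\hat\alpha_i(\cdot,1)$.

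The step I expect to be the main obstacle is the causality and time-budget bookkeeping. Recovering the left trace $\bar\alpha_{i+1}(\cdot,0)$ requires advancing $\bar\alpha_{i+1}(\cdot,1)$ by the transport times $\frac{1}{\lambda_{i+1}^k}$, the largest being $\frac{1}{\lambda_{i+1}^1}$; consequently the admissible reconstruction horizon shrinks from $\sum_{j=1}^{i+1}\frac{1}{\lambda_j^1}$ to exactly $\sum_{j=1}^{i}\frac{1}{\lambda_j^1}$, which is precisely the window claimed in the statement, while the computation of $\hat v_{i+1}(\cdot,0)$, depending only on past values, leaves this budget untouched. One must verify that the cumulative future shift ever requested from the raw measurement never exceeds $\sum_{j=1}^N\frac{1}{\lambda_j^1}$; this is guaranteed by $\tau>\sum_{j=1}^N\frac{1}{\lambda_j^1}$, which is exactly why the delayed system~\eqref{Chap_3_eq_delayed_PDE_u}-\eqref{Chap_3_eq_init_bound_v_delay} was introduced, turning the required anticipation into already-available past values of $y$.

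Iterating this construction from $i=N$ down to $i=1$ produces the full family of estimators $\hat\alpha_i(\cdot,1)$ and $\hat v_{i+1}(\cdot,0)$, each depending causally on $\bar y$ and each exact on its window, which establishes the claim; these reconstructed traces are then what the re-centered predictors of Property~\ref{Chap_3_assum_prediction} will use to recover the full delayed state.
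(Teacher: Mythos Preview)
Your approach is essentially the paper's: backward induction from $i=N$ using the neutral representations~\eqref{Chap_3_eq_neutral_v}--\eqref{Chap_3_eq_neutral_alpha}, Lemma~\ref{Chap_3_Lemma_Characteristics} to pass from the right trace of $\alpha$ to the left, and the left inverse of $Q_{i+1,i}$ from Assumption~\ref{Chap_3_ass-obs} to step one level upstream, with the time-budget bookkeeping you describe. One correction: in your inductive step you compute $\hat v_{i+1}(\cdot,0)$ \emph{before} $\hat\alpha_{i+1}(\cdot,0)$, claiming~\eqref{Chap_3_eq_neutral_v} expresses $v_{i+1}(t,0)$ only in terms of $\alpha_{i+1}(\cdot,1)$ and $v_{i+2}(\cdot,0)$; in fact that relation also contains the integral term $\int_0^{\tau_{i+1}} (g_{i+1}^1)_{k\ell}(\nu)\,\alpha_{i+1}^\ell(t-\nu,0)\,d\nu$, so the left trace $\hat\alpha_{i+1}(\cdot,0)$ must be recovered first (this is the order the paper uses). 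Swapping your first two sub-steps fixes this without affecting your time-budget analysis, since the additional dependence is on \emph{past} values of $\alpha_{i+1}(\cdot,0)$.
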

\begin{proof}
The proof relies on an induction argument. Lemma~\ref{Chap_3_lemma:observability} obviously holds for $i=N$ with $\hat \alpha_N(t,1)=\bar y(t)$ and $\hat v_{N+1}(t,0)=0$. Let us now consider the $i^{th}$ subsystem~$i \in \{2,\hdots,N\}$, $t>0$ and assume that we can design exact state estimations $\hat \alpha_i(\nu,1)$ and $\hat  v_{i+1}(\nu,0)$ that causally depend on the measurement $y(t)$ such that for all $\nu \in [t,t+\sum_{j=1}^i\frac{1}{\lambda^1_j}]$, $\hat \alpha_i(t+\nu,1)=\bar \alpha_{i}(t+\nu,1)$ and $\hat  v_{i+1}(t+\nu,0)=\bar v_{i+1}(t+\nu,0)$.  From equation~\eqref{Chap_3_eq_alpha_delay_1}, we can define the intermediate estimator $\hat \alpha_i(t,0)$ such that  for all $1\leq k \leq n_{i}$ and all $t>0$
\begin{align}
    \hat \alpha^k_{i}(t,0) &= \hat  \alpha^k_{i}(t+\frac{1}{\lambda_{i}^k},1)\nonumber \\
    &-\sum_{\ell=k+1}^{n_{i}}\int_0^{\frac{1}{\lambda_{i}^k}}(\check G_{i}(\nu))_{k\ell} \hat   \alpha_{i}^\ell(t+\nu,1)d\nu. \label{Chap_3_eq_alpha_delay_est}
\end{align}
We immediately obtain that for all $\nu \in [t,t+\sum_{j=1}^{i-1}\frac{1}{\lambda^1_j}]$, $\hat \alpha_i(t+\nu,0)=\bar \alpha_{i}(t+\nu,0)$. We now define the function $\hat v_i(t,0)$, such that for all $1\leq k \leq m_i$
\begin{align}
    &\hat v_i^k(t,0)=\sum_{\ell=1}^{n_i}(R_{i,i})_{k\ell}\hat \alpha_j^\ell(t-\frac{1}{\mu_i^k},1)+\sum_{\ell=1}^{m_{i+1}}(R_{i,i+1})_{k\ell}\nonumber \\
    &\hat v_{i+1}^\ell(t-\frac{1}{\mu_i^k},0) +\sum_{\ell=1}^{n_i}\int_0^{\tau_i} (g_i^1)_{k\ell}(\nu)  \hat\alpha_i^\ell(t-\nu,0)d\nu\nonumber \\
    &+\sum_{\ell=1}^{m_{i+1}}\int_0^{\tau_i} (g_i^2)_{k\ell}(\nu) \hat v_{i+1}^\ell(t-\nu,0)d\nu. \label{Chap_3_eq_neutral_v_est}
\end{align}
We have that for all $\nu \in [t,t+\sum_{j=1}^{i-1}\frac{1}{\lambda^1_j}]$, $\hat v_i(t+\nu,0)=\bar v_{i}(t+\nu,0)$ due to equation~\eqref{Chap_3_eq_neutral_v}.
Finally, combining  Assumption~\ref{Chap_3_ass-obs} and equation~\eqref{Chap_3_eq_neutral_alpha}, we can obtain the desired estimations of $\bar \alpha_{i-1}(t,1)$. The different estimators are causal as they only require past values of the function~$y$. This concludes the proof. $\blacksquare$
\end{proof}
From Lemma~\ref{Chap_3_lemma:observability}, we obtain the following property
\begin{property} \label{Chap_3_Prop_estim}
    For all $i \in \{1,\hdots,N\}$, we can design exact state estimators $\hat u(t,x)$ and $\hat v(t,x)$ that causally depend on the measurement~$y(t)$ such that for all $t>0$, and all $x\in [0,1]$, $\hat u(t,x)=\bar u(t,x)$ and $\hat v(t,x)=\bar v(t,x)$.
\end{property}
\begin{proof}
   Combining the state estimations given in Lemma~\ref{Chap_3_lemma:observability} with the method of characteristics, it is possible to estimate the state $\bar \alpha_i$ and $\bar \beta_i$. Then, we can compute the estimators $\hat u(t,x)$ and $\hat v(t,x)$ using the inverse transformations~\eqref{Chap_3_eq_integral_time_terms_inverse}-\eqref{Chap_3_eq_integral_time_terms_inverse_2}. We do not give the explicit expression of these state estimators for the sake of concision.
\end{proof}

%%%%%%%%%%%%%%%%%%%%%%%%%%%%%%%%%%%%%%%%%%%%%%%%%%%%%%%%%%%%%%%%%%%%%%%%%%%%%%%%%%%%%%%%%%%%%%
\subsection{Stabilizing output-feedback controller} \label{Chap_3_Sec_Output_Feedback}
We have designed in Property~\ref{Chap_3_Prop_estim} a state-observer that provides a real-time exact estimation of the delayed states $(\bar u_i, \bar v_i )$. This state-observer can be combined with the state-feedback controller designed in Theorem~\ref{Chap_3_th_stab_rec} to obtain an output-feedback stabilizing controller. Indeed, combining Remark~\ref{Chap_3_Remark_delay} and Theorem~\ref{Chap_3_th_stab_rec}, we can design a state-feedback controller for the delayed system~\eqref{Chap_3_eq_delayed_PDE_u}-\eqref{Chap_3_eq_init_bound_v_delay}. This state feedback controller requires the knowledge of the delayed states $(\bar u_i, \bar v_i )$, provided by Property~\ref{Chap_3_Prop_estim}. Therefore, we can obtain a stabilizing output-feedback controller for the delayed system~\eqref{Chap_3_eq_delayed_PDE_u}-\eqref{Chap_3_eq_init_bound_v_delay}. The exponential stability of the delayed system~\eqref{Chap_3_eq_delayed_PDE_u}-\eqref{Chap_3_eq_init_bound_v_delay} implies the exponential stability of the original system \eqref{Chap_3_eq_init_PDE_u}-\eqref{Chap_3_eq_init_bound_v}.
%%%%%%%%%%%%%%%%%%%%%%%%%%%%%%%%%%%%%%%%%%%%%%%%%%%%%%%%%%%%%%%%

\section{Simulation results} \label{Section_Simulations}

We now illustrate our results in simulations using Matlab. The PDE system is simulated using \modif{a high-resolution explicit scheme similar to the one used in~\cite{deutscher2023backstepping} with 101 spatial discretization points}. The algorithm we use to compute the different kernels is the following. Using the method of characteristics, we write the integral equations associated to the kernel PDE-systems. These integral equations are solved using a fixed-point algorithm. The predictor is implemented using a backward Euler approximation of the integral. The numerical values used are given below
\begin{align*}
    &\Lambda_1^+=\begin{pmatrix}
        1 &0 \\ 0 & 1.4
    \end{pmatrix},~\Lambda_1^-=1.2, ~\Sigma^{++}_1=\begin{pmatrix}
        0 &0.4 \\ 0.4 & 0
    \end{pmatrix}, \\
    &\Sigma^{--}_1=0,~\Sigma^{-+}_1(x)=\begin{pmatrix}
        0.2 &0.9+0.1\sin(x)
    \end{pmatrix},\\
    &\Sigma^{+-}_1=\begin{pmatrix}
        0.8 \\ 0.8+0.1\cos(x)
    \end{pmatrix},~\Lambda_2^+=\begin{pmatrix}
        1 &0 \\ 0 & 1.5
    \end{pmatrix}, \\
    & \Lambda_2^-=1,~\Sigma^{++}_2=\begin{pmatrix}
        0 &1 \\ 0.5 & 0
    \end{pmatrix},~\Sigma^{-+}_2=\begin{pmatrix}
        0.2 &0
    \end{pmatrix}, \\
    &\Sigma^{--}_2=0,~\Sigma^{+-}_2=\begin{pmatrix}
       1 \\ 0
    \end{pmatrix},~\Lambda_3^+=\begin{pmatrix}
        1.1 &0 \\ 0 & 1.6
    \end{pmatrix}, \\
    & ~\Lambda_3^-=0.8,~\Sigma^{++}_3=\begin{pmatrix}
        0 &1 \\ 0.5 & 0
    \end{pmatrix},~\Sigma^{+-}_3=\begin{pmatrix}
        0.3 \\ 0
    \end{pmatrix}, \\
    &\Sigma^{--}_3=0,~\Sigma^{-+}_3=\begin{pmatrix}
        -0.5 &0.2
    \end{pmatrix}, \\
    &Q_{11}=\begin{pmatrix}
        0.1 \\ 0.2
    \end{pmatrix},~R_{11}=\begin{pmatrix}
        0.7 & 0.1
    \end{pmatrix},~Q_{21}=\begin{pmatrix}
        1 &0 \\ 0 & 1
    \end{pmatrix}, \\
    &Q_{22}=\begin{pmatrix}
        0.4 \\ 0.3
    \end{pmatrix},~R_{22}=\begin{pmatrix}
        0.2 & 0
    \end{pmatrix},~Q_{32}=\begin{pmatrix}
        1 &0 \\ 0 & 1
    \end{pmatrix},\\
    &R_{12}=R_{23}=0.1,~ Q_{33}=\begin{pmatrix}
        0.3 \\ 0.6
    \end{pmatrix},~R_{33}=\begin{pmatrix}
        0 & 0.4
    \end{pmatrix}.
\end{align*}

These coefficients have been chosen to make the whole interconnection unstable in open-loop. However, the corresponding divergence rate is small to avoid numerical issues (having a greater divergence rate would require more accuracy in the simulations and, consequently, a longer simulation time). 
\modif{The output-feedback control law presented in Section~\ref{Chap_3_Sec_Output_Feedback} \modif{is combined with a well-tuned low-pass filter (see~Section~\ref{Sec_filter}). We used a simple low-pass filter of $4\text{th}$ order with a bandwidth of $125~ \text{rad.s}^{-1}$. We added an input delay of $0.1s$ to show the robustness of the design to small delays in the loop. Some parameters are subject to constant multiplicative uncertainties (up to 5\%).}
% \modif{We added a random mismatch (up to $5\%$) on some coefficients. Moreover, we consider an input delay of $0.1s$}. 
We have pictured in Figure~\ref{fig:OL-CL} the time evolution of the $L^2$-norm of the open-loop system and the closed-loop system. As expected, the resulting system exponentially converges to zero. We do not have finite-time convergence due to the filter and the different numerical approximations.} The corresponding control effort has been plotted in Fig~\ref{fig:control_effort}.

 \begin{figure}[ht]
\begin{center}
		\includegraphics[width=0.8\columnwidth]{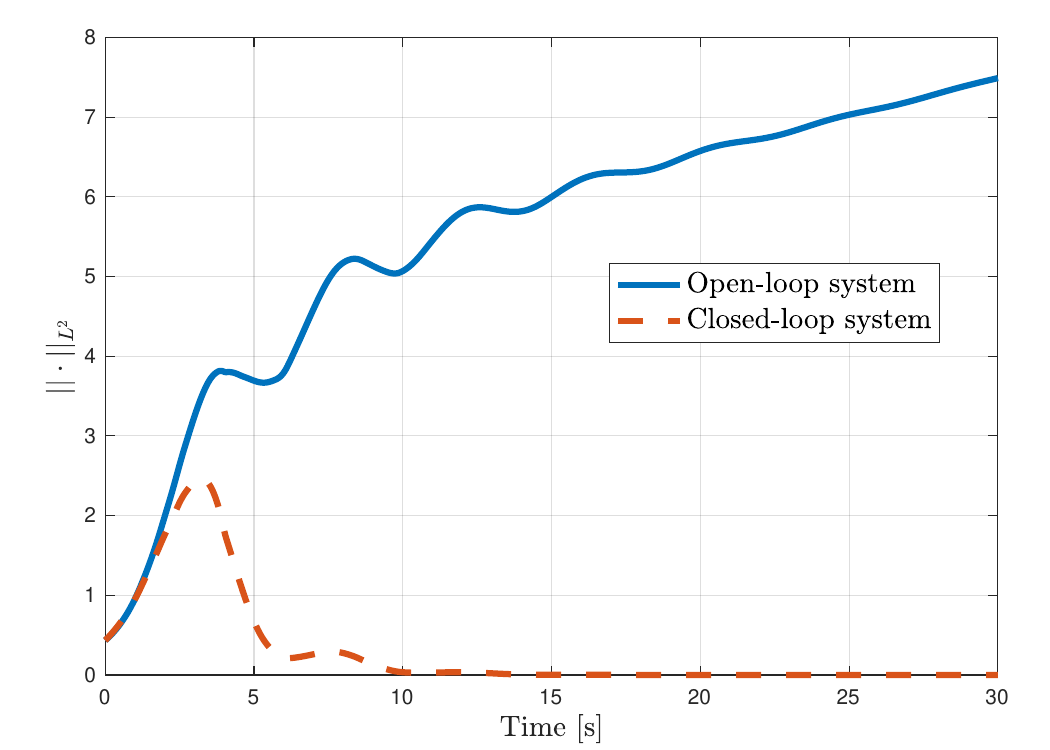}
	\caption{Evolution of the $L^2$-norm of the system~\eqref{Chap_3_eq_init_PDE_u}-\eqref{Chap_3_eq_init_bound_v} in open-loop (blue) and in closed-loop (dashed red), using the (filtered) output-feedback control law presented in Section~\ref{Chap_3_Sec_Output_Feedback}.}\label{fig:OL-CL}
	\end{center}
\end{figure}

 \begin{figure}[ht]
\begin{center}
		\includegraphics[width=0.8\columnwidth]{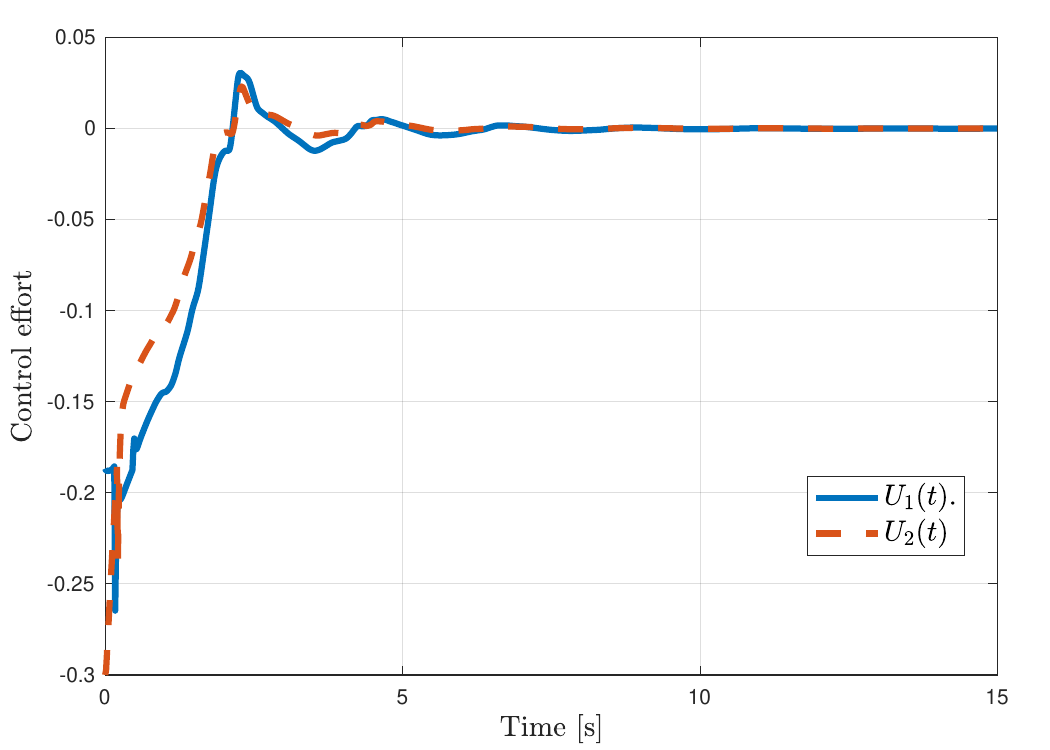}
	\caption{Evolution of the control effort $U_1(t)$ and $U_2(t)$.}\label{fig:control_effort}
	\end{center}
\end{figure}
\section{Conclusion and perspectives} \label{Section_Conclusion}

In this paper, we have introduced a recursive methodology to design a stabilizing output-feedback controller for a network of $N$ PDE subsystems with a chain structure. The different subsystems are interconnected through their boundaries, and the control input is located at one extremity of the chain. The proposed framework required several fundamental properties for each subsystem: output trajectory tracking, input-to-state stability, predictability (we can design predictors of the different states), and observability. We have shown that these properties were always satisfied for hyperbolic subsystems. The proposed approach is modular in that additional subsystems can easily be included. Moreover, we believe the proposed framework can be extended to different types of subsystems (such as ODEs and parabolic equations) as it has been done in~\cite{redaud2021SCL} with an ODE at the end of the chain, provided similar properties can still be verified. Recent results have been developed in~\cite{xu2023stabilization} for parabolic systems using an analogous recursive approach. In~\cite{deutscher2023backstepping}, the authors have considered interconnections between hyperbolic and parabolic systems. It could be interesting in future works to analyze if the control strategy developed in~\cite{deutscher2023backstepping} could be applied to interconnections of hyperbolic systems and compared with our methodology.

\modif{One current limitation of our proposed approach is its high complexity and computational burden. We need to compute state predictions for each subsystem composing the interconnection, which may be time-consuming.} This numerical burden may explode with the number of subsystems, thus making any implementation impossible. To leverage the numerical effort induced by these controllers, it may be necessary to approximate them (e.g., by finite-dimensional systems). This emphasizes the necessity of investigating the questions of model reduction using late-lumping techniques~\cite{ecklebe2017approximation,auriol2019late}. Recently, {machine-learning approximations (based on the DeepONet algorithm) have been successfully tested in \cite{shi2022machine} on simple examples, but there is no general proof of convergence yet. Concerning implementing the proposed recursive control law, we underline that the robustness aspects have been neglected in this paper. However, we believe the results from~\cite{auriol2023robustification} can be adjusted to cover the proposed control strategy. In future works, we will also consider the case of having the actuator located at one of the intersection nodes of the chain. As shown in~\cite{redaud2022stabilizing,redaud2021stabilizing}, this raises challenging \textbf{controllability questions}. In most cases, such interconnected systems may not be controllable, and appropriate controllability conditions must be derived.

\appendices
\section{Technical Lemma} \label{proof_Prop_track}

\begin{lemma} \label{Chap_3_Lemma_Characteristics}
Consider the $i^{th}$ subsystem ($1\leq i \leq N$).
    There exist matrix functions $\check G_i$ and $\tilde G_i$ such that for all $1\leq j \leq n_i$, for all $x \in [0,1]$, for all $t>t_0+\frac{1}{\lambda_i^1}$
\begin{align}
    \alpha^j_i(t,x)&=\alpha^j_i(t+\frac{1-x}{\lambda_i^j},1)\nonumber \\
    &+\sum_{\ell=j+1}^{n_i}\int_0^{\frac{1-x}{\lambda_i^j}}(\check G_i(x,\nu))_{j\ell}\alpha_i^\ell(t+\nu,1)d\nu. \label{Chap_3_eq_alpha_delay_1} \\
    \alpha^j_i(t,x)&=\alpha^j_i(t-\frac{x}{\lambda_i^j},0)\nonumber \\
    &+\sum_{\ell=j+1}^{n_i}\int_0^{\frac{x}{\lambda_i^j}}(\tilde G_i(x,\nu))_{j\ell}\alpha_i^\ell(t-\nu,0)d\nu. \label{Chap_3_eq_alpha_delay}
\end{align}
\end{lemma}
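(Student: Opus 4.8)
The plan is to integrate the target dynamics~\eqref{Chap_3_PDE_alpha} along its characteristic curves and to exploit the strictly upper-triangular structure of the source matrix $G_i$ so as to close the representation by a \emph{downward} induction on the component index $j$, running from $j=n_i$ down to $j=1$. For the first identity~\eqref{Chap_3_eq_alpha_delay_1}, I would follow the characteristic of the $j$-th component, $x(s)=x+\lambda_i^j(s-t)$, which reaches the boundary $x=1$ after a time $\frac{1-x}{\lambda_i^j}$. Integrating $\frac{d}{ds}\alpha_i^j(s,x(s))=(G_i\alpha_i)^j(s,x(s))$ between these two instants, using that by strict upper-triangularity $(G_i\alpha_i)^j=\sum_{\ell>j}(G_i)_{j\ell}\alpha_i^\ell$, and setting $\nu=s-t$, one obtains
\begin{align}
\alpha_i^j(t,x)=\alpha_i^j\Big(t+\tfrac{1-x}{\lambda_i^j},1\Big)-\int_0^{\frac{1-x}{\lambda_i^j}}\sum_{\ell>j}\big(G_i(x+\lambda_i^j\nu)\big)_{j\ell}\,\alpha_i^\ell\big(t+\nu,\,x+\lambda_i^j\nu\big)\,\mathrm{d}\nu. \notag
\end{align}
The base case $j=n_i$ is immediate: the sum over $\ell>n_i$ is empty, so $\alpha_i^{n_i}$ satisfies~\eqref{Chap_3_eq_alpha_delay_1} with a vanishing integral term.

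For the inductive step I would assume~\eqref{Chap_3_eq_alpha_delay_1} already holds for every index $\ell>j$ and substitute it into the integrand above, evaluating each $\alpha_i^\ell$ at the interior point $y=x+\lambda_i^j\nu$ and time $t+\nu$. Applying the induction hypothesis transports these interior values to the boundary $x=1$: the leading contribution becomes $\alpha_i^\ell\big(t+\nu+\frac{1-y}{\lambda_i^\ell},1\big)$, and the residual terms are integrals of the still-higher-index components $\alpha_i^m(\cdot,1)$, $m>\ell$. The crucial observation is that the map $\nu\mapsto \nu+\frac{1-x-\lambda_i^j\nu}{\lambda_i^\ell}$ is strictly increasing, since $\lambda_i^\ell>\lambda_i^j$ by the velocity ordering $\lambda_i^1<\cdots<\lambda_i^{n_i}$; performing this change of variable, together with Fubini's theorem on the nested double integrals, re-expresses everything as a finite sum of terms of the form $\int_0^{(1-x)/\lambda_i^j}(\check G_i(x,\nu))_{j\ell}\alpha_i^\ell(t+\nu,1)\,\mathrm{d}\nu$. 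This defines the kernel $\check G_i$ recursively and establishes~\eqref{Chap_3_eq_alpha_delay_1} for index $j$.

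I expect the main difficulty to be the bookkeeping attached to this substitution rather than any deep analytic issue: one must track the nested time shifts, verify via Fubini that the double integrals collapse into single integrals against $\alpha_i^\ell(t+\cdot,1)$, and control the integration limits. In particular, since the transported time argument only ranges over the subinterval $[\frac{1-x}{\lambda_i^\ell},\frac{1-x}{\lambda_i^j}]\subset[0,\frac{1-x}{\lambda_i^j}]$, the kernel $\check G_i$ is obtained by extension by zero outside this range, which is precisely why $\check G_i$ is only \emph{piecewise} continuous, with a jump at $\nu=\frac{1-x}{\lambda_i^\ell}$; its boundedness and piecewise continuity then follow from those of $G_i$. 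The second identity~\eqref{Chap_3_eq_alpha_delay} is proved by the same argument run in the opposite spatial direction: the characteristic of the $j$-th component now reaches $x=0$ backward in time after a delay $\frac{x}{\lambda_i^j}$, and the identical downward induction, transporting interior values to the boundary $x=0$, produces the kernel $\tilde G_i$ and the representation in terms of the past boundary data $\alpha_i^\ell(t-\nu,0)$.
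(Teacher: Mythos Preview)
Your proposal is correct and follows essentially the same approach as the paper's own proof: integrate~\eqref{Chap_3_PDE_alpha} along characteristics, use the strict upper-triangularity of $G_i$ to set up a downward induction on $j$ starting from $j=n_i$, and close the induction by substituting the hypothesis into the integrand and applying a change of variables together with Fubini's theorem. If anything, you supply more detail than the paper does---in particular your remarks on the monotonicity of $\nu\mapsto\nu+\frac{1-x-\lambda_i^j\nu}{\lambda_i^\ell}$ and on the resulting piecewise-continuous structure of $\check G_i$ make explicit what the paper leaves to the reader.
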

\begin{proof}
We will only prove that equation~\eqref{Chap_3_eq_alpha_delay_1}. The proof of equation~\eqref{Chap_3_eq_alpha_delay} is analogous. The proof will be done recursively.  
     Applying the method of characteristics to equation~\eqref{Chap_3_PDE_alpha}, we obtain for all $1\leq j \leq n_i$, for all $x \in [0,1]$, and for all $t>t_0+\frac{1}{\lambda_i^1}$,
\begin{align}
    &\alpha^j_i(t,x)=\alpha_i^j(t+\frac{1-x}{\lambda_i^j},1)\nonumber \\
    &-\sum_{k=j+1}^{n_i}\int_0^{\frac{1-x}{\lambda_i^j}}(G_i(x+\lambda_i^j\nu))_{jk}\alpha_i^k(t+\nu,x+\lambda_i^j \nu)d\nu. \label{Chap_3_charac_x}
\end{align}
In particular, since the matrix $G_i$ is strictly upper-triangular, we obtain 
\begin{align*}
    \alpha^{n_i}_i(t,x)&=\alpha_{n_i}^j(t+\frac{1-x}{\lambda_i^{n_i}},1).
\end{align*}
Let us now consider $1< j \leq n_i$ and assume that there exist matrix functions $\check G_i$ such that equation~\eqref{Chap_3_eq_alpha_delay_1} holds for any $k\geq j$. Applying equation~\eqref{Chap_3_eq_alpha_delay_1} to equation~\eqref{Chap_3_charac_x}, we obtain
\begin{align*}
    &\alpha^{j-1}_i(t,x)=\alpha_i^{j-1}(t+\frac{1-x}{\lambda_i^{j-1}},1)-\sum_{k=j}^{n_i}\int_0^{\frac{1-x}{\lambda_i^{j-1}}}(G_i(x+\nonumber \\
    &\lambda_i^{j-1}\nu))_{j-1,k}[\alpha_i^k(t+\nu+\frac{1-x-\lambda_i^{j-1}\nu}{\lambda_i^{k}},1) +\sum_{\ell=k+1}^{n_i} \\
      & \int_0^{\frac{1-x-\lambda_i^{j-1}\nu}{\lambda_i^{k}}} (\check G_i(x+\lambda_i^{j-1} \nu,\eta))_{k\ell} \alpha_i^\ell(t+\nu+\eta,1)d\eta]d\nu. 
\end{align*}
We obtain the desired expression by performing changes of variables and applying Fubini's theorem.
 $\blacksquare$
\end{proof}

% you can choose not to have a title for an appendix
% if you want by leaving the argument blank

\bibliographystyle{plain}
\bibliography{biblio}

%\end{thebibliography}

% biography section
% 
% If you have an EPS/PDF photo (graphicx package needed) extra braces are
% needed around the contents of the optional argument to biography to prevent
% the LaTeX parser from getting confused when it sees the complicated
% \includegraphics command within an optional argument. (You could create
% your own custom macro containing the \includegraphics command to make things
% simpler here.)
%\begin{IEEEbiography}[{\includegraphics[width=1in,height=1.25in,clip,keepaspectratio]{mshell}}]{Michael Shell}
% or if you just want to reserve a space for a photo:

\begin{IEEEbiography}
[{\includegraphics[width=1in,height=1.25in,clip,keepaspectratio]{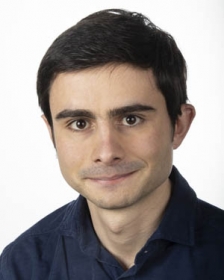}}]{Jean Auriol}
received his Master degree in civil engineering in 2015 in Mines Paris, part of PSL Research University and in 2018
his Ph.D. degree in control theory and applied mathematics from the same university (Centre Automatique
et Systèmes). His Ph.D. thesis, titled Robust design of
backstepping controllers for systems of linear hyperbolic PDEs, has been nominated for the best thesis award given by the GDR MACS and the Section Automatique du Club EEA in France. From 2018 to 2019,
he was a Postdoctoral Researcher at the Department
of Petroleum Engineering, University of Calgary, AB, Canada, where he was
working on the implementation of backstepping control laws for the attenuation of mechanical vibrations in drilling systems. From December 2019, he is a Researcher (Chargé de Recherches) at CNRS, Université Paris-Saclay, Centrale Supelec, Laboratoire des Signaux et Systèmes (L2S), Gif-sur-Yvette, France. His research interests include robust control of hyperbolic systems, neutral systems, and interconnected systems
\end{IEEEbiography}

% if you will not have a photo at all:

% insert where needed to balance the two columns on the last page with
% biographies
%\newpage

% You can push biographies down or up by placing
% a \vfill before or after them. The appropriate
% use of \vfill depends on what kind of text is
% on the last page and whether or not the columns
% are being equalized.

%\vfill

% Can be used to pull up biographies so that the bottom of the last one
% is flush with the other column.
%\enlargethispage{-5in}

% that's all folks
\end{document}